\newcommand{\myauthor}{Benjamin Antieau and Greg Stevenson}
\newcommand{\mytitle}{Derived categories of representations of small categories over commutative noetherian rings}
\newcommand{\pdftitle}{\mytitle}
\author{\myauthor}
\date{}
\title{\mytitle}
\definecolor{todo}{rgb}{1,0,0}
\definecolor{conditional}{rgb}{0,1,0}
\definecolor{e-mail}{rgb}{0,.40,.80}
\definecolor{reference}{rgb}{.20,.60,.22}
\definecolor{mrnumber}{rgb}{.80,.40,0}
\definecolor{citation}{rgb}{0,.40,.80}
\newtheorem*{theorem*}{Theorem}
\theoremstyle{definition}
\newtheorem*{ack}{Acknowledgements}{}
\newcommand{\prn}[1]{\{#1\}}
\newcommand{\rprn}[1]{\textcolor{todo}{[}#1\textcolor{todo}{]}}
\newcommand{\perf}{\mathrm{perf}}
\DeclareMathOperator{\Modu}{Mod}
\DeclareMathOperator{\Mod}{Mod}
\newcommand{\we}{\simeq}
\newcommand{\iso}{\cong}
\newcommand{\stovicek}{\v{S}\v{t}ov{\'\i}\v{c}ek}
\newcommand{\add}{\mathrm{add}}
\newcommand{\qc}{\mathrm{qc}}
\newcommand{\opp}{\mathrm{op}}
\DeclareMathOperator{\Spec}{Spec}
\DeclareMathOperator{\Ext}{Ext}
\newcommand{\Hom}{\mathrm{Hom}}
\newcommand{\Fun}{\mathrm{Fun}}
\newcommand{\Lrm}{\mathrm{L}}
\newcommand{\Mrm}{\mathrm{M}}
\newcommand{\Drm}{\mathrm{D}}
\newcommand{\Erm}{\mathrm{E}}
\newcommand{\Srm}{\mathrm{S}}
\newcommand{\Trm}{\mathrm{T}}
\newcommand{\Lscr}{\mathscr{L}}
\newcommand{\CC}{\mathds{C}}
\newcommand{\ZZ}{\mathds{Z}}
\newcommand{\PP}{\mathds{P}}
\theoremstyle{plain}
\newtheorem{theorem}{Theorem}[section]
\newtheorem{lemma}[theorem]{Lemma}
\newtheorem{proposition}[theorem]{Proposition}
\newtheorem{corollary}[theorem]{Corollary}
\theoremstyle{definition}
\newtheorem{definition}[theorem]{Definition}
\newtheorem{example}[theorem]{Example}
\newtheorem{question}[theorem]{Question}
\newtheorem{remark}[theorem]{Remark}
\let\oldmarginpar\marginpar
\renewcommand\marginpar[1]{\-\oldmarginpar[\raggedleft\footnotesize #1]%
{\raggedright\footnotesize #1}}
\begin{document}
\maketitle

\begin{abstract}

    \noindent
		We study the derived categories of small categories over commutative noetherian 
		rings. Our main result is a parametrization of the localizing subcategories in 
		terms of the spectrum of the ring and the localizing subcategories over residue 
		fields. In the special case of representations of Dynkin quivers over a commutative 
		noetherian ring we give a complete description of the localizing subcategories 
		of the derived category, a complete description of the thick subcategories 
		of the perfect complexes and show the telescope conjecture holds. 
		We also present some results concerning the telescope conjecture more generally.
		
    \paragraph{Key Words}
    Derived categories, localizations, telescope conjecture.

    \paragraph{Mathematics Subject Classification 2010}
    Primary:
    \href{http://www.ams.org/mathscinet/msc/msc2010.html?t=16Exx&btn=Current}{16E35},
    \href{http://www.ams.org/mathscinet/msc/msc2010.html?t=16Gxx&btn=Current}{16G20}.
    Secondary:
    \href{http://www.ams.org/mathscinet/msc/msc2010.html?t=13Dxx&btn=Current}{13D09},
    \href{http://www.ams.org/mathscinet/msc/msc2010.html?t=18Gxx&btn=Current}{18G55}.

\end{abstract}

\section{Introduction}
\let\thefootnote\relax\footnote{The first author was supported by NSF Grant No. DMS-1461847.
The second author was partly supported by NSF Grant No.\
0932078 000 while in residence at the Mathematical Science Research Institute and by the
Alexander von Humboldt Stiftung.}

If $\mathrm{T}$ is a triangulated category with all coproducts, a localizing subcategory
$\Lrm\subseteq\mathrm{T}$ is a full triangulated subcategory closed under all coproducts in
$\mathrm{T}$. Localizing subcategories are so-named because in good cases
(the Bousfield localizations) the Verdier quotient functor $\Trm\rightarrow \Trm/\Lrm$ possesses a right
adjoint, i.e.\ they give rise to localization functors. Understanding the collection of localizing subcategories 
on a given triangulated category is a challenging and interesting problem which has been completely resolved 
in only a few classes of examples. 

The history of such problems has roots in stable homotopy theory, where one would like to
relate two localizations of the $p$-local stable homotopy category $\mathrm{SH}_{(p)}$: one which
has excellent theoretical properties (localization with respect to the homology theory given
by the Johnson-Wilson spectrum $\Erm(n)$) and one which is computable (the telescopic
localization).  The importance of such questions arose first in the work of
Bousfield~\cite{bousfield-localization} and
Ravenel~\cite{ravenel-localization}.
That these two localizations agree is the still-open telescope
conjecture. Work on nilpotence closely related to the telescope conjecture by
Devinatz, Hopkins, and Smith~\cite{devinatz-hopkins-smith,hopkins-smith}
has led to the classification of all thick subcategories, i.e.\ triangulated subcategories closed under direct summands, of $\mathrm{SH}^{\mathrm{fin}}$, the
homotopy category of finite spectra. Using similar ideas on the detection of nilpotent maps
between objects in $\Drm(R)$, Neeman~\cite{neeman-chromatic} classified the localizing
subcategories of $\Drm(R)$ and the thick subcategories of $\Drm^\perf(R)$ when $R$ is
noetherian in terms of $\Spec R$.

Going beyond the example of $\Drm(R)$ where $R$ is noetherian and commutative seems rather
difficult. In terms of classification of thick subcategories of $\Drm^\perf(X)$, when $X$
is a quasi-compact and quasi-separated scheme, one has the result of
Thomason~\cite{thomason-triangulated}, which says that the thick subcategories which are also tensor
ideals correspond bijectively to unions of closed subsets of $X$ with quasi-compact
complement. This kind of result has been taken up by other authors, such as
Benson-Carlson-Rickard~\cite{benson-carlson-rickard} and Benson-Iyengar-Krause~\cite{benson-iyengar-krause},
who study the tensor ideals of stable module categories of finite groups. This is part of a
generalized framework of studying tensor ideals, pursued by Balmer~\cite{balmer},
Dell'Ambrogio-Stevenson~\cite{dellambrogio-stevenson-graded,dellambrogio-stevenson-spectra},
and Stevenson~\cite{stevenson-support,stevenson-singularity}.

In contrast to all that is known about thick subcategories, very little is known about
localizing subcategories outside of Neeman's theorem. For instance, one does not know all
localizing subcategories of $\Drm_{\qc}(\PP^1_\CC)$. We mention one more example,
due to Br\"uning~\cite{bruning}, who classified the localizing subcategories
of $\Drm(A)$ where $A$ is a hereditary Artin algebra of finite representation type.

Let $R$ be a noetherian commutative ring.
We show that in many cases classification of the localizing subcategories of an $R$-linear triangulated category can be
reduced to to studying the localizing subcategories of the ``fibers'' over the residue
fields of $R$.

Let $C$ be a small category,
and let $s:\Lscr\rightarrow\Spec R$ denote the class constructed fiber by fiber over $\Spec R$,
by letting $s^{-1}(p)$, for $p\in \Spec R$, be the class of localizing subcategories of $\Drm(k(p)C)$.
Note that, a priori, the localizing subcategories of $\Drm(k(p)C)$ only form a
proper class, which is the reason for the careful wording above. There is, however, no known example of a compactly 
generated triangulated category whose collection of localizing subcategories does not form a set.
 The following result is our first theorem.

\begin{theorem*}[\ref{cor:main}]\label{thm:classification}
    Let $R$ be a noetherian commutative ring and $C$ a small category. Then
    there is an isomorphism of lattices
    \begin{equation*}
        \xymatrix{
            \left\{\text{localizing subcategories $\Lrm$ of $\Drm(RC)$}\right\} \ar@<1ex>[r]^-f    &
            \left\{\text{sections $l$ of $\Lscr\xrightarrow{s}\Spec R$}\right\}\ar@<1ex>[l]^-g
        },
    \end{equation*}
    where $f$ takes a localizing subcategory $\Lrm$ of $\Drm(RC)$ to the function $l:\Spec
    R\rightarrow\Lscr$ such that $l(p)=\add(k(p)\otimes_R\Lrm)$, and where $g(l)$ is the
    localizing subcategory generated by all $X$ such that $k(p)\otimes_RX\in l(p)$ for all
    $p\in\Spec R$.
\end{theorem*}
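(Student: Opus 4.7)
The strategy is Neeman's residue-field approach adapted to the $R$-linear triangulated category $\Drm(RC)$: one uses the base-change functors $k(p)\otimes_R(-)\colon\Drm(RC)\to\Drm(k(p)C)$, which are exact, coproduct-preserving, and admit fully faithful right adjoints given by restriction of scalars, to stratify $\Drm(RC)$ over $\Spec R$ and reduce the classification of localizing subcategories to that of the fibers. Well-definedness and monotonicity of $f,g$ are routine: for $f$ the functor $k(p)\otimes_R(-)$ sends $\Lrm$ to an essential image whose localizing closure (denoted here by $\add$) is the desired value of the section, and for $g$ the class $\{X:k(p)\otimes_R X\in l(p)\text{ for all }p\}$ is trivially closed under shifts, cones, and coproducts.

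For $f\circ g=\id$, fix a section $l$ and a prime $q$. The inclusion $\add(k(q)\otimes_R g(l))\subseteq l(q)$ is definitional. For the reverse, I would take $Y\in l(q)$, view it as an object $\widetilde Y\in\Drm(RC)$ via restriction along $R\to k(q)$, and observe that $Y$ lies in the localizing closure of $k(q)\otimes_R\widetilde Y$ (since the latter is $(k(q)\otimes_R^{\mathrm L}k(q))\otimes_{k(q)}^{\mathrm L}Y$, a coproduct of shifts of $Y$). The remaining point is that $\widetilde Y\in g(l)$, i.e., $k(p)\otimes_R\widetilde Y\in l(p)$ for $p\ne q$; this follows from the vanishing $k(p)\otimes_R^{\mathrm L}k(q)=0$ for $p\ne q$, proved by choosing an element lying in exactly one of the primes and noting that it acts as zero on one factor and invertibly on the other, forcing the derived tensor product to vanish.

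The core of the theorem is $g\circ f=\id$, whose nontrivial direction asserts that any $X\in\Drm(RC)$ satisfying $k(p)\otimes_R X\in\add(k(p)\otimes_R\Lrm)$ for every $p$ must lie in $\Lrm$. The plan is to exploit the $\Drm(R)$-module structure on $\Drm(RC)$ to decompose $X$ via the local cohomology and localization functors $\Gamma_p,L_p$ over $\Spec R$, recovering $X$ functorially from its $p$-local $p$-torsion fragments as $p$ ranges over $\Spec R$. Each such fragment is controlled by its residue-field reduction $k(p)\otimes_R X$, which by hypothesis lies in the localizing closure of $k(p)\otimes_R\Lrm$, and so should be buildable inside $\Lrm$ itself. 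The main obstacle is making this reconstruction precise: one must show that $\Drm(RC)$ is stratified over $\Spec R$ in the Benson--Iyengar--Krause sense, and that the stratum at $p$ is faithfully represented by $\Drm(k(p)C)$. I expect this to follow by combining Neeman's classification for $\Drm(R)$ with a fiberwise argument that separates the $C$-direction from the $R$-direction, using the compatibility of $\otimes_R$ with the constant $R$-action on $\Drm(RC)$.
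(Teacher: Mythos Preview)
Your approach is essentially the paper's: reduce via the local-to-global principle for the $\Drm(R)$-action (your ``decompose $X$ via $\Gamma_p,L_p$'') to showing that any $X$ supported at $p$ lies in $\langle k(p)\otimes_R X\rangle$, and deduce this from Neeman's result $\Gamma_pR\in\langle k(p)\rangle$ in $\Drm(R)$ together with closure of localizing subcategories under the $\Drm(R)$-action. Your treatment of $f\circ g=\id$ via the vanishing $k(p)\otimes_R^{\mathrm L}k(q)=0$ for $p\neq q$ is also correct and matches the paper's Lemma on split injectivity.

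Two small corrections, neither of which affects the argument. First, restriction of scalars $i_*\colon\Drm(k(p)C)\to\Drm(RC)$ is \emph{not} fully faithful unless $R_p$ is a field: the unit $Y\to i^*i_*Y\simeq (k(p)\otimes_R^{\mathrm L}k(p))\otimes_{k(p)}Y$ exhibits the target as a nontrivial sum of shifts of $Y$ whenever $\Tor^R_{>0}(k(p),k(p))\neq 0$. You never actually use full faithfulness, only that $Y$ is a summand of $i^*i_*Y$, which is what the paper uses as well. Second, $\Drm(RC)$ is not ``stratified over $\Spec R$ in the Benson--Iyengar--Krause sense'' when $C$ is nontrivial, since BIK stratification requires each stratum $\Gamma_p\Drm(RC)$ to be minimal, whereas here it has many localizing subcategories (in bijection with those of $\Drm(k(p)C)$). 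What you need, and what the paper invokes, is only the local-to-global principle $\langle X\rangle=\langle\Gamma_pR\otimes_R X:p\in\Spec R\rangle$, which holds for any $\Drm(R)$-linear compactly generated category.
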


In fact, our methods apply somewhat more generally, allowing one to replace $\Drm(RC)$ with derived
categories of representations of $R$-flat $R$-linear categories.

Our second result is a classification of the telescopic localizations of $\Drm(RQ)$ and a
classification of the thick subcategories of $\Drm^\perf(RQ)$ when $Q$ is a Dynkin quiver.

\begin{theorem*}[\ref{cor:ADEclassification}, \ref{cor:ADE1}, \ref{cor:ADE2}]
    Let $R$ be a noetherian commutative ring, $Q$ a simply laced Dynkin quiver, and denote by $RQ$ the $R$-linear path algebra of $Q$. There is an 
    isomorphism of lattices
    \begin{displaymath}
    \xymatrix{
            \left\{\text{localizing subcategories of $\Drm(RQ)$}\right\} \ar@<1ex>[r]^-f    &
            \left\{\text{functions $\Spec R \rightarrow \mathrm{NC}(Q)$}\right\}\ar@<1ex>[l]^-g
        },
    \end{displaymath}
    where $\mathrm{NC}(Q)$ denotes the lattice of noncrossing partitions associated to $Q$. 
		
    Moreover, the 
    telescope conjecture holds for $\Drm(RQ)$ and the smashing subcategories, which by virtue of the telescope conjecture are in bijection with thick subcategories of 
    $\Drm^\perf(RQ)$, correspond to those $\sigma\colon \Spec R \to \mathrm{NC}(Q)$ such that whenever $p\subseteq q$ in 
    $\Spec R$ we have $\sigma(p)\leq \sigma(q)$.
\end{theorem*}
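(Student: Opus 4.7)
The plan is to deduce the theorem from Theorem~\ref{cor:main} by identifying the fibers of $\Lscr \to \Spec R$ with the noncrossing partition lattice $\mathrm{NC}(Q)$, and then cutting out those sections that correspond to thick subcategories of perfect complexes.

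First, I apply Theorem~\ref{cor:main} with $C = Q$ and identify each fiber. For any field $k$, the algebra $kQ$ is hereditary of finite representation type, so by the theorem of Ingalls--Thomas the thick subcategories of $\Drm^\perf(kQ)$ are in bijection with $\mathrm{NC}(Q)$. Since every indecomposable $kQ$-module is compact in $\Drm(kQ)$ and there are only finitely many up to isomorphism, a result of Br\"uning shows that every localizing subcategory is generated by the compacts it contains. Consequently localizing subcategories of $\Drm(k(p)Q)$ are canonically parametrized by $\mathrm{NC}(Q)$, independently of the residue field $k(p)$. Substituting this identification into Theorem~\ref{cor:main} yields the bijection between localizing subcategories of $\Drm(RQ)$ and arbitrary functions $\Spec R \to \mathrm{NC}(Q)$, proving Corollary~\ref{cor:ADEclassification}.

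For the telescope conjecture, I would argue that every smashing subcategory $\Srm \subseteq \Drm(RQ)$ is compactly generated. Smashing localizations are compatible with the base change $R \to k(p)$, so the image of $\Srm$ in $\Drm(k(p)Q)$ is a smashing subcategory there; by the field-case classification these are automatically compactly generated, corresponding to a well-defined function $\sigma\colon \Spec R \to \mathrm{NC}(Q)$. Because $\mathrm{NC}(Q)$ is a finite lattice, $\sigma$ takes only finitely many values, and over each piece of a suitable constructible decomposition of $\Spec R$ on which $\sigma$ is constant one can exhibit perfect complexes in $\Drm^\perf(RQ)$ whose residue-field reductions realize the prescribed indecomposables. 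Assembling these yields a compact generating set for $\Srm$. I expect this gluing step to be the main obstacle of the proof, since it requires simultaneously controlling the Zariski geometry of $\Spec R$ and the indecomposable structure of $kQ$-modules; having the finite lattice $\mathrm{NC}(Q)$ available is essential here and is what makes the Dynkin hypothesis do its work.

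Once the telescope conjecture is in hand, smashing subcategories of $\Drm(RQ)$ correspond bijectively to thick subcategories of $\Drm^\perf(RQ)$, so it remains only to characterise the sections arising this way. For a perfect complex $X \in \Drm^\perf(RQ)$ the assignment $p \mapsto [k(p)\otimes_R X]\in \mathrm{NC}(Q)$ is upper semi-continuous under specialization: the locus of $p$ on which a given indecomposable summand appears is open in $\Spec R$, so as $p$ specialises to $q$ summands may merge but cannot split, forcing $\sigma(p) \leq \sigma(q)$ whenever $p\subseteq q$. Conversely, any monotone $\sigma$ is realised by the thick subcategory generated by perfects built stratum-by-stratum over the finitely many level sets of $\sigma$, completing the identification and proving Corollary~\ref{cor:ADE2}.
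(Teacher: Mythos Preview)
Your treatment of Corollary~\ref{cor:ADEclassification} is essentially the paper's argument, though note that the paper cites Krause's extension of Ingalls--Thomas precisely because it is stated without hypotheses on the field; you should make sure Ingalls--Thomas and Br\"uning apply to arbitrary residue fields $k(p)$, or cite the same source.

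The substantive gap is in your argument for the telescope conjecture. Your proposed ``constructible decomposition'' step is not a proof, and the obstacle you flag is real: an arbitrary function $\sigma$ coming from a smashing subcategory has no reason to have constructible level sets, and even if it did, perfect complexes have \emph{specialization closed} support, not constructible support, so you cannot in general build compacts living over a locally closed stratum. The paper does not attempt this. Instead it reverses the order of the two remaining claims: it first proves directly that $f(\Srm)$ is specialization closed for any smashing $\Srm$ (Theorem~\ref{thm:ADEsmashing}), and only then deduces compact generation from specialization closure (Lemma~\ref{lem:sccompact}). The key idea you are missing for the first step is Crawley-Boevey's rigid lattice lifts $\widetilde{M}$ of indecomposable $k(p)Q$-modules (Lemma~\ref{lem:bill}): given $p\subseteq q$ and $M\in k(p)\otimes\Srm$, one analyses the localization triangle for $\widetilde{M}$ and shows that any nonzero indecomposable summand $N$ of $k(q)\otimes j_*j^*\widetilde{M}\in\Srm^\perp$ would force a nonzero map $E(k(p))\otimes\widetilde{M}\to E(k(q))\otimes\widetilde{N}$ from $\Srm$ to $\Srm^\perp$, a contradiction. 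Once specialization closure is known, compact generation follows by tensoring lattice lifts with ordinary Koszul complexes $K(p)$.

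Your semicontinuity argument for Corollary~\ref{cor:ADE2} is also not rigorous as stated: the claim that ``summands may merge but cannot split'' under specialization is not justified and is in any case the wrong heuristic here, since the relevant monotonicity is about which indecomposables lie in the subcategory, not about how a fixed perfect complex decomposes. In the paper this direction is exactly Theorem~\ref{thm:ADEsmashing} again, applied to the smashing subcategory generated by a thick subcategory of compacts, so once you have the argument above this part comes for free.
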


In terms of the localizing subcategories, this theorem basically combines
Theorem~\ref{thm:classification} with the results of Ingalls and
Thomas~\cite{ingalls-thomas} on localizing subcategories of $\Drm(kQ)$ for fields $k$.

Initially, we had also hoped to prove the telescope conjecture for the telescopic localizations
of $\Drm(RC)$ more generally, at least with some hopefully mild hypothesis. This turned out to be overly ambitious, but 
we present some partial results in Section~\ref{sec:telescope}.

\begin{ack}
We would like to express our thanks, to MSRI for its hospitality during the thematic program ``Noncommutative algebraic
geometry and representation theory,'' and to
Universit\"at Bielefeld for its hospitality to the first author. We are also grateful to the anonymous referee for providing several helpful comments.
\end{ack}

\section{Preliminaries on representations of small categories}

Throughout we fix a commutative ring $R$. Let $C$ be a small category.

\begin{definition}
    The category of \emph{right} $C$\emph{-modules over} $R$ is the functor category
    \begin{displaymath}
        \Modu_R C = \Fun(C^\opp, \Modu R)
    \end{displaymath}
    consisting of contravariant functors from $C$ to the category of $R$-modules.
\end{definition}

The following well known lemma ensures that we can use the standard tools of
homological algebra when dealing with $C$-modules.

\begin{lemma}
The category $\Modu_R C$ of right $C$-modules over $R$ is a Grothendieck category with enough projectives.
\end{lemma}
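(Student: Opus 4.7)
The plan is to verify the Grothendieck category axioms and to exhibit enough projectives via the standard family of representable right $C$-modules.

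Since limits and colimits in $\Fun(C^\opp,\Modu R)$ are computed pointwise, the abelian structure on $\Modu R$ transfers objectwise to $\Modu_R C$: kernels and cokernels exist, all small coproducts exist, and the exactness of filtered colimits in $\Modu R$ forces the AB5 axiom in $\Modu_R C$, because a sequence is exact in $\Modu_R C$ if and only if it is exact at each object of $C$. Thus $\Modu_R C$ is already a cocomplete abelian category in which filtered colimits are exact.

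To produce a projective generator, I would introduce, for each object $c$ of $C$, the representable right $C$-module $P_c := R[C(-,c)]$, i.e.\ the free $R$-module on the contravariant hom-set. The $R$-linear Yoneda lemma gives a natural isomorphism $\Hom_{\Modu_R C}(P_c, F) \cong F(c)$. Two consequences follow at once: the functor $\Hom_{\Modu_R C}(P_c,-)$ is evaluation at $c$, which is exact, so each $P_c$ is projective; and since $C$ is small, the collection $\{P_c\}_{c \in C}$ is a set that jointly detects nonzero objects, so $\bigoplus_{c \in C} P_c$ is a projective generator. For enough projectives, given any $F \in \Modu_R C$, I would assemble from Yoneda the canonical morphism $\bigoplus_{c,\, x \in F(c)} P_c \to F$ that sends the summand indexed by $(c,x)$ to the map corresponding to $x$ under Yoneda; this is surjective at every object of $C$, and its source is a coproduct of projectives, hence projective.

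There is no real obstacle here; the only point worth flagging is set-theoretic, namely that the index set $\{(c,x) : c \in C,\, x \in F(c)\}$ is indeed a set, which is immediate from the smallness of $C$ and the fact that each $F(c)$ is an $R$-module.
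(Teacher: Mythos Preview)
Your proposal is correct and follows essentially the same approach as the paper: both use pointwise computation of (co)limits to get AB5, the representable functors $P_c = R[C(-,c)]$ together with Yoneda to identify $\Hom(P_c,F)\cong F(c)$, and then $\bigoplus_c P_c$ as a projective generator. The paper merely sketches this and leaves the details to the reader; your write-up supplies exactly those details, including the explicit epimorphism from a coproduct of representables onto an arbitrary $F$.
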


\begin{proof}
    Recall that a Grothendieck (abelian) category is an abelian category (1) satisfying axiom
    (AB5), on the existence and exactness of filtered colimits, and (2)
    possessing a generator. The lemma can be proved by showing that the direct
    sum of the set of representable objects is a generator, that filtered
    colimits are computed pointwise so that (AB5) follows from the satisfaction
    of that axiom for $\Modu_R$ itself, and finally that the projective objects
    of $\Modu_R C$ are summands of direct sums of representables. Details are
    left to the reader.
\end{proof}

We can also approach $C$-modules via $R$-linear functors.

\begin{definition}
    The \emph{$R$-linearization} of $C$, which we will denote by $RC$,
    is the category with the same objects as $C$ and 
    whose hom-objects are free $R$-modules on the hom-sets of $C$
    \begin{displaymath}
    RC(c,c') = \bigoplus_{f\in C(c,c')} Rf,
    \end{displaymath}
    with the obvious composition rule. In other words, $RC$ is the free $R$-linear category on $C$.
\end{definition}

\begin{definition}
    An $R$-linear category $D$ is a small category enriched in $R$-modules. It is
    \emph{flat} if $D(c,c')$ is a flat $R$-module for all pairs of objects $c,c'$ in $D$.
\end{definition}

\begin{definition}
    If $D$ is an $R$-linear category, then the category of right $D$-modules over $R$
    is defined to be the functor category
    \begin{equation*}
        \Mod_R D=\Fun_R(D,\Modu R)
    \end{equation*}
    of $R$-linear functors.
\end{definition}

Evidently, $RC$ is a flat $R$-linear category for any small category $C$, since the hom-objects are
free. The reason for looking at these more general categories is to capture the
representation theory of $R$-algebras ``with many objects'', whereas the representations of $RC$ are
representations of monoids with many objects. In the case where $C$ has one
object with monoid of endomorphisms $M$, the category of representations of $C$
in $R$-modules is equivalent to the category of right $R[M]$-modules, where $R[M]$ is the
monoid algebra of $M$. On the other hand, if $D$ is an $R$-linear category with
one object having endomorphism algebra $S$, then $S$ is an $R$-algebra, and the
category of $R$-linear representations of $D$ is equivalent to the category of
right $S$-modules. Of course, not every $R$-algebra is a monoid algebra, so the
$R$-linear categories capture more examples.

Of course, we should now check that $\Modu_R C$ and $\Modu_R RC$ are equivalent. We do this in
a moment, but we first want to introduce extra structure that will be preserved.
Tensoring an $RC$-module objectwise with an $R$-module defines a bifunctor
\begin{displaymath}
    \Modu R \times \Mod_R RC \stackrel{\otimes_R}{\rightarrow} \Mod_R RC
\end{displaymath}
which is explicitly given by $(M\otimes_RF)(c) = M\otimes_RF(c)$ for an 
$R$-module $M$, $RC$-module $F$, and $c\in C$. This gives an action of 
the category of $R$-modules on the category of $RC$-modules. We note this action is nothing 
other than the existence of copowers for the $R$-linear category $\Mod_R RC$. There is, of course, a similar
action on $\Mod_R D$ when $D$ is an $R$-linear category.

\begin{remark}\label{rem:lazy}
	Here and in the sequel we will work with categories of the form $RC$ since our main examples are of this form. 
	However, our results are equally valid for flat $R$-linear categories; the only changes which need to be made are cosmetic.
\end{remark}

\begin{lemma}
    The natural map $\Mod_R RC\rightarrow\Mod_R C$ is an equivalence for any small category
    $C$. Moreover, this equivalence is compatible with the actions described above.
\end{lemma}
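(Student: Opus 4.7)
The plan is to invoke the universal property of the $R$-linearization, which is that the functor $C \mapsto RC$ is left adjoint to the forgetful functor from $R$-linear categories to ordinary categories. Concretely, for any small category $C$ and any $R$-linear category $\Escr$, restriction along the canonical functor $C \rightarrow RC$ (resp. $C^{\opp} \rightarrow (RC)^{\opp} = R(C^{\opp})$) induces a bijection
\begin{equation*}
    \Fun_R((RC)^{\opp}, \Escr) \;\cong\; \Fun(C^{\opp}, \Escr),
\end{equation*}
whose inverse sends an ordinary functor $F$ to its $R$-linear extension $\tilde F$, which is $F$ on objects and extends $R$-linearly on morphisms by $\tilde F(\sum r_i f_i) = \sum r_i F(f_i)$. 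Specializing to $\Escr = \Modu R$ gives the object-level statement of the lemma.

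To upgrade this to an equivalence of categories, I would verify that morphisms correspond: a natural transformation between $R$-linear functors $(RC)^{\opp} \rightarrow \Modu R$ is a collection of components indexed by the objects of $RC$ subject to naturality over all morphisms of $RC$. Since $RC$ has the same objects as $C$, the collection of components is identical on both sides, and the $R$-linearity of the target forces that naturality over $C$ automatically implies naturality over $RC$. Thus the restriction functor is fully faithful, and combined with essential surjectivity (which follows from the $R$-linear extension construction above) it is an equivalence.

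Finally, compatibility with the actions of $\Modu R$ is essentially automatic. Both actions are defined pointwise at each object by $(M\otimes_R F)(c) = M\otimes_R F(c)$, and the equivalence preserves the underlying assignment on objects. Explicitly, if $\tilde F$ is the $R$-linear extension of $F$, then $M\otimes_R \tilde F$ and $\widetilde{M\otimes_R F}$ agree on objects by definition and on morphisms by $R$-linearity, so they coincide; checking naturality in $M$ and $F$ is routine.

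There is no real obstacle here; the content is purely a translation of the universal property. The only point requiring any care is bookkeeping around variance (interpreting $\Modu_R D$ consistently with contravariant functors when $D = RC$), and the observation that naturality over $RC$ follows from naturality over $C$ because the target is $R$-linear.
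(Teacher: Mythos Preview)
Your proposal is correct and is essentially the same approach as the paper's own proof: the paper simply cites the standard $2$-adjunction between categories and $R$-linear categories (referring to Kelly's book), which is precisely the universal property of the $R$-linearization you have spelled out in detail. Your explicit verification that restriction along $C\to RC$ is fully faithful and essentially surjective, together with the pointwise check of compatibility with the $\Modu R$-actions and your remark about the variance bookkeeping, just unpacks what the paper leaves implicit in that citation.
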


\begin{proof}
    This follows from the standard $2$-adjunction relating categories and $R$-linear categories, see for instance \cite{Kelly}*{Chapter~2.5}.
\end{proof}

\begin{lemma}
    Given a morphism of commutative rings $R\stackrel{\phi}{\rightarrow} S$ the natural base change functor 
    \begin{equation*}
        \phi^*:\Mod_R RC\rightarrow\Mod_S SC
    \end{equation*}
    has a right adjoint $\phi_*$.
\end{lemma}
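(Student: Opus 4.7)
The plan is to construct $\phi_*$ explicitly as a pointwise restriction-of-scalars functor, and then derive the adjunction from the standard pointwise extension/restriction adjunction between $\Mod R$ and $\Mod S$. Writing $\phi_0^{*}\colon \Mod R \to \Mod S$ for $S\otimes_R -$ and $\phi_{0*}\colon \Mod S \to \Mod R$ for restriction along $\phi$, both are $R$-linear functors, and the adjunction $(\phi_0^{*},\phi_{0*})$ will essentially be applied objectwise.

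First I would set up notation: base change induces an $R$-linear functor $\iota\colon RC \to SC$ which is the identity on objects and is the map $RC(c,c')\to SC(c,c') = S\otimes_R RC(c,c')$ on hom-objects. The functor $\phi^*$ is then described concretely by $(\phi^{*}F)(c) = S\otimes_R F(c)$, with its $S$-linear functoriality determined via $\iota$ from the $R$-linear functoriality of $F$. I would then define $\phi_*$ by the formula
\begin{equation*}
(\phi_* G)(c) = \phi_{0*} G(c),
\end{equation*}
with the $R$-linear functoriality given by pre-composing the $S$-linear functoriality of $G$ with $\iota\colon RC \to SC$ and then restricting scalars. This is manifestly functorial in $G$.

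Next I would exhibit unit and counit. Define $\eta_F\colon F \to \phi_*\phi^{*}F$ objectwise by the unit $F(c) \to \phi_{0*}(S\otimes_R F(c))$ of the pointwise adjunction, and $\varepsilon_G\colon \phi^{*}\phi_* G \to G$ objectwise by the counit $S\otimes_R G(c) \to G(c)$. Naturality in $C$, as well as $R$- or $S$-linearity of these transformations, is automatic because the pointwise functors $\phi_0^{*}$ and $\phi_{0*}$ are $R$-linear, so they commute with the enriched action on hom-objects. The triangle identities for $(\phi^{*},\phi_*)$ then hold after evaluation at every object $c\in C$, hence hold globally. Equivalently, one assembles the natural bijection
\begin{equation*}
\Hom_{\Mod_S SC}(\phi^{*}F, G) \;\xrightarrow{\sim}\; \Hom_{\Mod_R RC}(F, \phi_* G)
\end{equation*}
by applying the pointwise adjunction to each component of a natural transformation, and observes that an $S$-linear natural transformation $\phi^{*}F \to G$ corresponds exactly to an $R$-linear natural transformation $F \to \phi_* G$ because the compatibility with $SC(c,c')$-action on one side is, via $\iota$, the same datum as compatibility with $RC(c,c')$-action on the other.

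I do not expect any serious obstacle: the statement is really a special case of the general change-of-enrichment adjunction for enriched functor categories, and the only thing to monitor carefully is that the bookkeeping between enrichment over $R$ and enrichment over $S$ lines up with the pointwise adjunction. This is handled by the observation (implicitly used in the previous lemma) that $SC$ is the free $S$-linear category on $RC$, so specifying an $S$-linear functor out of $SC$ is the same as specifying an $R$-linear functor out of $RC$ landing in an $S$-linear target.
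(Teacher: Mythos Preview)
Your proof is correct and follows essentially the same approach as the paper: both identify $\phi_*$ as objectwise restriction of scalars and trace the adjunction to the standard change-of-enrichment (2-adjunction) between $R$-linear and $S$-linear categories. The paper's proof is a two-line sketch, whereas you have carefully unpacked the unit, counit, and triangle identities; your version is more explicit but not genuinely different in strategy.
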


\begin{proof}
    The functor $\phi^*$ is given by applying $S\otimes_R-$ objectwise and $\phi_*$ is induced by 
    restriction of scalars. This is again induced by a standard $2$-adjunction between $R$-linear and $S$-linear 
    categories corresponding to $\phi$.
\end{proof}

\section{Generalities on derived categories of small categories over a commutative ring}

Again $R$ is a fixed commutative ring which we now also assume is noetherian, and $C$ is a small category with $R$-linearisation $RC$.
The (unbounded) derived category $\Drm(RC)$ of $RC$ is the category of
complexes of right $RC$-modules where quasi-isomorphisms have been inverted. We note that this is 
a compactly generated triangulated category and the compact objects are, up to quasi-isomorphism, precisely the bounded 
complexes of projective $RC$-modules.

Recall that a localizing subcategory of $\Drm(RC)$ is a full triangulated subcategory of $\Drm(RC)$ closed under
coproducts (any such subcategory is automatically closed under direct summands). We want to consider to what
extent the localizing subcategories of $\Drm(RC)$ are determined by the localizing subcategories
of $\Drm(k(p)C)$ as $p$ ranges over the prime ideals of $R$. This is inspired by work of Neeman \cite{neeman-chromatic} 
who showed that in the case $C$ is the terminal category i.e., $RC = R$, the localizing subcategories 
of $\Drm(R)$ are determined by those of the $\Drm(k(p))$. We restrict to noetherian rings as, 
even in the case $RC=R$,
it is known that $\Spec R$ does not determine the localizing subcategories of $\Drm(R)$ in general.

Let us begin with the observation that the action of $\Modu R$ on $\Modu_R C$ can be derived.

\begin{lemma}
The bifunctor $\Modu R \times \Modu_RC \rightarrow \Modu_RC$ is left balanced, with respect to 
flat $R$-modules and objectwise $R$-flat $RC$-modules, i.e.\ it is exact when either the first variable is flat or 
the second variable is objectwise flat. It admits a left derived functor, independent up to isomorphism of which variable it is derived in, 
which gives a left action $\Drm(R)\times \Drm(RC) \rightarrow \Drm(RC)$ in the sense of \cite{stevenson-support}.
\end{lemma}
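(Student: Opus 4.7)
My plan is to prove each of the three assertions (exactness in each variable separately, existence and well-definedness of the derived functor, and the action property) in turn, using that flatness is an objectwise condition for $RC$-modules.

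First, I would check left balancedness directly. If $M \in \Modu R$ is flat, then $M \otimes_R -$ is exact on $\Modu R$, and since a sequence of $RC$-modules is exact iff it is exact at each object $c \in C$, applying $M \otimes_R -$ objectwise preserves exactness. Conversely, if $F \in \Mod_R RC$ is objectwise flat, then for any short exact sequence $0 \to M' \to M \to M'' \to 0$ of $R$-modules, tensoring objectwise with $F$ produces, at each $c$, the sequence $0 \to M' \otimes_R F(c) \to M \otimes_R F(c) \to M'' \otimes_R F(c) \to 0$, which is exact by flatness of $F(c)$; again, objectwise exactness suffices.

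Next, I would construct the derived functor via K-flat resolutions in either variable. The category $\Mod_R RC$ has enough objectwise-flat objects (for instance summands of sums of representables tensored with flat $R$-modules), and standard arguments (see e.g.\ Spaltenstein's construction, or pass through a suitable model structure) produce K-flat resolutions $Q F \we F$ in $\Drm(RC)$. Similarly $\Drm(R)$ admits K-flat resolutions $QM \we M$. Define $M \otimes_R^{\Lrm} F$ by resolving either variable. Independence of the choice, and agreement of the two constructions, follows by the usual double-complex / bi-resolution argument: take K-flat resolutions $QM \to M$ and $QF \to F$; then both $QM \otimes_R F$ and $M \otimes_R QF$ receive quasi-isomorphisms from $QM \otimes_R QF$, because tensoring a K-flat object with a quasi-isomorphism preserves quasi-isomorphism (checked objectwise, where it reduces to the usual fact over $R$).

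Finally, to upgrade this to an action $\Drm(R) \times \Drm(RC) \to \Drm(RC)$ in the sense of \cite{stevenson-support}, I would verify the associativity and unit coherences. The unit is clear: $R \otimes_R^{\Lrm} F \we F$, since $R$ is K-flat and acts as the identity objectwise. For associativity, given $M, N \in \Drm(R)$ and $F \in \Drm(RC)$, choose K-flat resolutions $QM, QN, QF$; then the underived isomorphism $(QM \otimes_R QN) \otimes_R QF \cong QM \otimes_R (QN \otimes_R QF)$ (which holds objectwise because tensor product of $R$-modules is associative) descends to the required natural isomorphism in the derived category, and the coherence pentagons follow from those of the underived tensor product on K-flat objects.

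The main obstacle is essentially the bookkeeping for the coherences of the action: one has to be careful that K-flatness is preserved under $\otimes_R$ of K-flat complexes of $R$-modules, and that the result of tensoring an objectwise K-flat complex of $RC$-modules with a K-flat complex of $R$-modules remains objectwise K-flat. Both facts are objectwise statements that reduce to the corresponding well-known facts over $R$, so no serious new difficulty arises beyond what one encounters in constructing the derived tensor action of $\Drm(R)$ on itself.
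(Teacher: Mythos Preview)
Your proposal is correct and follows essentially the same approach as the paper: verify exactness objectwise, observe that $\Mod_R RC$ has enough objectwise-flat objects (the paper uses projectives, which are componentwise projective), derive using resolutions in either variable with the standard balancing argument, and then check associativity and unit directly. You are simply more explicit about the K-flat machinery needed for unbounded complexes, which the paper's terse proof leaves implicit.
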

\begin{proof}
Given $F\in \Modu_RC$ such that $F$ is objectwise $R$-flat it is clear $-\otimes_RF$ is exact. 
As $\Modu_RC$ has enough projectives,
and the projective $RC$-modules are componentwise projective we see $\Modu_RC$ 
has enough objectwise $R$-flat modules. It is thus clear the functor can be left derived, using resolutions either in 
$\Modu R$ or $\Modu_RC$, and that it does not matter, up to quasi-isomorphism, on which side the resolution is taken 
(i.e., $-\otimes_R-$ is balanced as claimed).
It is straightforward to check this gives an associative and unital action of $\Drm(R)$ on $\Drm(RC)$.
\end{proof}

\begin{remark}
Given $E\in \Drm(R)$ and $F\in \Drm(RC)$ we will simply denote $E\otimes_R^\Lrm F$ by $E\otimes_R F$ or even $E\otimes F$; no confusion 
should result as we will almost exclusively work with derived functors (frequently with $R$ fixed or clear from the context).
\end{remark}

This allows us to utilize the machinery of tensor actions to analyze localizing subcategories of $\Drm(RC)$. 
After giving a convenient lemma and some notation we will recall the main result from this theory that we will need.

\begin{lemma}\label{lem:tensorclosure}
    Any localizing subcategory $\Lrm\subseteq\Drm(RC)$ is closed under tensoring with
    complexes of $R$-modules. Explicitly, for any $M\in \Drm(R)$ and $X\in \Lrm$ we have $M\otimes_RX\in \Lrm$.
    \begin{proof}
        Evidently, if $X\in\Lrm$, then $R\otimes_RX\we X\in\Lrm$. Since $-\otimes_RX$
        preserves coproducts, it follows that the subcategory of $\Drm(R)$
        consisting of complexes of $R$-modules $M$ such that $M\otimes_RX\in\Lrm$ is
        localizing and contains
        $R$. Since $R$ is a compact generator of $\Drm(R)$, the lemma follows.
    \end{proof}
\end{lemma}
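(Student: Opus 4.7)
The plan is to fix an arbitrary object $X \in \Lrm$ and then study the full subcategory
\[
\Lrm_X = \{M \in \Drm(R) : M \otimes_R X \in \Lrm\} \subseteq \Drm(R).
\]
The goal is to show $\Lrm_X = \Drm(R)$ for every $X \in \Lrm$, which is exactly what the lemma demands.

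First I would verify that $\Lrm_X$ is a localizing subcategory of $\Drm(R)$. The functor $-\otimes_R X \colon \Drm(R) \to \Drm(RC)$ is triangulated and preserves coproducts, so if a distinguished triangle in $\Drm(R)$ has two vertices sent into $\Lrm$, the third is also sent into $\Lrm$ because $\Lrm$ itself is triangulated; similarly, coproducts of objects of $\Lrm_X$ land in $\Lrm$ because $\Lrm$ is closed under coproducts in $\Drm(RC)$. Closure under shifts is immediate.

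Next I would observe that $R \in \Lrm_X$, since $R \otimes_R X \simeq X \in \Lrm$ by unitality of the action established in the preceding lemma. At this point I invoke the standard fact that $R$ is a compact generator of $\Drm(R)$, so the smallest localizing subcategory of $\Drm(R)$ containing $R$ is all of $\Drm(R)$. Therefore $\Lrm_X = \Drm(R)$, and since $X \in \Lrm$ was arbitrary this yields the claim.

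There is really no serious obstacle here; the argument is the usual ``test subcategory'' technique. The only mild subtlety worth double-checking is that $-\otimes_R X$ preserves coproducts at the derived level, which follows from the construction of the action in the previous lemma (it is computed by tensoring against an objectwise flat resolution, and the ordinary tensor product commutes with coproducts in each variable).
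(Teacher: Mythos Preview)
Your proposal is correct and is essentially identical to the paper's own proof: both fix $X\in\Lrm$, form the full subcategory $\{M\in\Drm(R):M\otimes_RX\in\Lrm\}$, observe it is localizing and contains $R$, and conclude it is all of $\Drm(R)$ since $R$ is a compact generator. The only difference is that you spell out the verification that this subcategory is localizing in slightly more detail.
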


Let $f$ be an element of $R$. We denote by $K_\infty(f)$ the \emph{stable Koszul complex of} $f$
\begin{displaymath}
R\rightarrow R_f
\end{displaymath}
where the map is the canonical one. Given a prime ideal $p$ of $R$ we set
\begin{displaymath}
K_\infty(p) = K_\infty(f_1)\otimes_R \cdots \otimes_R K_\infty(f_n),
\end{displaymath}
where $f_1,\ldots,f_n$ is a choice of generators for $p$. The resulting complex is independent 
of the choice of generators up to quasi-isomorphism (independence is usually left as an exercise but a proof can be found for instance in \cite{Greenlees}*{Lemma~2.3}).

Given $p \in \Spec R$ we define the object $\Gamma_pR$ to be $K_\infty(p)\otimes_R R_p$.
We recall from \cite{stevenson-support} 
that $\Gamma_pR\otimes_R\Gamma_pR \we \Gamma_pR$ and for $p\neq q$ in $\Spec
R$ we have $\Gamma_pR\otimes_R\Gamma_qR = 0$.

\begin{remark}
In more familiar language, the object $K_\infty(p)$ corresponds to taking local cohomology with support in $V(p)$ in the sense that the local cohomology functor is isomorphic to $K_\infty(p) \otimes (-)$. Thus $\Gamma_p R$ can be thought of as corresponding to ``$p$-localized local cohomology on $V(p)$.'' In general it differs from the residue field $k(p)$, which is rarely tensor idempotent. In certain situations, for instance if $R = \ZZ$, one can express $\Gamma_p R$ as a desuspension of a flat resolution of $E(k(p))$, the injective envelope of the residue field at $p$; for instance given a prime $p\in \ZZ$ one has $\Gamma_{(p)}\ZZ \cong \Sigma^{-1} E(\ZZ/p\ZZ)$. However, in general the precise relationship between $\Gamma_p R$, $k(p)$, and $E(k(p))$ seems to be more subtle.
\end{remark}

As a final point of notation, we will use $\langle S\rangle$ to denote the smallest localizing subcategory of a triangulated category generated by some collection of objects $S$.

\begin{theorem}[\cite{stevenson-support}*{Theorem~6.9}]\label{thm:ltg}
Given an object $X$ of $\Drm(RC)$ there is an equality of localizing subcategories
\begin{displaymath}
\langle X \rangle = \langle \Gamma_pR\otimes_R X\; \vert \; p\in \Spec R\rangle.
\end{displaymath}
It follows that $\Gamma_pR\otimes_R X \we 0$ for all prime ideals $p$ if and only if $X\we 0$.
\end{theorem}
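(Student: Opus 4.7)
The plan is to reduce the claim to the corresponding ``local-to-global'' principle inside $\Drm(R)$ itself by transporting it along the tensor action. The containment $\langle \Gamma_pR\otimes_R X\mid p\in\Spec R\rangle \subseteq \langle X\rangle$ is immediate from Lemma~\ref{lem:tensorclosure}: the localizing subcategory $\langle X\rangle$ is already closed under the action of $\Drm(R)$, so every $\Gamma_pR\otimes_R X$ sits inside it.

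For the reverse inclusion, I would introduce the functor
\begin{equation*}
    F\colon \Drm(R)\longrightarrow \Drm(RC),\qquad F(M)=M\otimes_R X,
\end{equation*}
which is triangulated and preserves coproducts. Writing $\Lrm=\langle \Gamma_pR\otimes_R X\mid p\in\Spec R\rangle$, the preimage $F^{-1}(\Lrm)$ is then automatically a localizing subcategory of $\Drm(R)$ that contains every $\Gamma_pR$ by construction. Since $X\we R\otimes_R X=F(R)$, it suffices to show that $R$ lies in the localizing subcategory of $\Drm(R)$ generated by $\{\Gamma_pR\mid p\in\Spec R\}$; granting this, $R\in F^{-1}(\Lrm)$ forces $X\in\Lrm$, completing the proof.

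Thus the content of the theorem is concentrated in the auxiliary claim that $\Drm(R)=\langle \Gamma_pR\mid p\in\Spec R\rangle$, which is the version of the statement for the unit action; this is the main obstacle. I would prove it by noetherian induction on $\Spec R$: given $M\in\Drm(R)$ with $\Gamma_pR\otimes_R M\we 0$ for every $p$, pick a minimal prime $q$ in the (closed under specialization) set of primes where $M$ could a priori be nonzero, and use the defining triangle for $K_\infty(f)$ for $f\in q$ together with the idempotence and orthogonality properties of the $\Gamma_pR$ recalled before the statement to reduce the support strictly, arriving at $M\we 0$. Equivalently, one can invoke Neeman's classification \cite{neeman-chromatic} directly: the assignment $\Lrm\mapsto\{p:\Gamma_pR\in\Lrm\}$ is a bijection between localizing subcategories of $\Drm(R)$ and subsets of $\Spec R$, and the right-hand side above corresponds to all of $\Spec R$.

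The final ``It follows'' is then a formal consequence: if $\Gamma_pR\otimes_R X\we 0$ for every $p$, then the generating set of $\langle X\rangle$ consists entirely of zero objects, whence $\langle X\rangle=0$ and $X\we 0$; the converse is obvious.
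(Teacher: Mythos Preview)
The paper does not give its own proof of this statement: it is quoted verbatim as \cite{stevenson-support}*{Theorem~6.9}, with no argument supplied. Your proposal is therefore not competing against a proof in the paper but rather supplying one where the authors chose to cite.

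That said, your argument is correct and is precisely the specialization to this setting of the general mechanism behind the cited result. The reduction via $F(M)=M\otimes_R X$ to the claim $R\in\langle\Gamma_pR\mid p\in\Spec R\rangle$ in $\Drm(R)$ is exactly how the local-to-global principle for tensor actions works, and the last step is indeed Neeman's theorem \cite{neeman-chromatic}. One small comment: your noetherian-induction sketch for the auxiliary claim is vaguer than it needs to be, whereas your alternative (``invoke Neeman's classification directly'') is clean and sufficient; I would lead with that. The deduction of the vanishing criterion in the final paragraph is fine.
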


\begin{corollary}
    If $X\in\Drm(RC)$ is non-zero, then there is some prime ideal $p$ of $R$ such that $k(p)\otimes_RX$ is
    not zero.
    \begin{proof}
        By the theorem there is a $p$ such that $\Gamma_pR\otimes_RX$ is non-zero. The result now follows as 
        $\langle \Gamma_pR\rangle = \langle k(p)\rangle$ in $\Drm(R)$
        by Neeman~\cite{neeman-chromatic}*{Section 2}, which implies $k(p)\otimes_RX \we 0$ if and only 
        if $\Gamma_pR\otimes_R X\we 0$.
    \end{proof}
\end{corollary}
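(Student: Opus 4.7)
The plan is to reduce the statement to Theorem~\ref{thm:ltg} by showing that, for a fixed prime $p$, the vanishing $k(p)\otimes_R X\simeq 0$ is equivalent to $\Gamma_pR\otimes_R X\simeq 0$. Once that is in hand the contrapositive follows directly: if $k(p)\otimes_R X\simeq 0$ for every $p\in\Spec R$, then $\Gamma_pR\otimes_R X\simeq 0$ for every $p$, and Theorem~\ref{thm:ltg} forces $X\simeq 0$.

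To get the equivalence of vanishings, I would introduce the full subcategory
\begin{equation*}
\Lrm_X=\{M\in\Drm(R)\mid M\otimes_R X\simeq 0\}\subseteq\Drm(R).
\end{equation*}
Since $-\otimes_R X\colon\Drm(R)\rightarrow\Drm(RC)$ is triangulated and commutes with coproducts (this is the content of the action formalism recalled before Lemma~\ref{lem:tensorclosure}), the subcategory $\Lrm_X$ is a localizing subcategory of $\Drm(R)$. Consequently $\Lrm_X$ contains $k(p)$ if and only if it contains the localizing subcategory $\langle k(p)\rangle$ generated by $k(p)$. By the result of Neeman cited in the text, $\langle k(p)\rangle=\langle\Gamma_pR\rangle$ inside $\Drm(R)$, and hence $k(p)\in\Lrm_X$ if and only if $\Gamma_pR\in\Lrm_X$, i.e.\ $k(p)\otimes_R X\simeq 0$ if and only if $\Gamma_pR\otimes_R X\simeq 0$.

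Combining the two steps finishes the proof. The only ``nontrivial'' input is the identification $\langle k(p)\rangle=\langle\Gamma_pR\rangle$ in $\Drm(R)$, which is imported from Neeman's work, together with Theorem~\ref{thm:ltg}; everything else is formal manipulation of the tensor action and the definition of a localizing subcategory, so I do not expect any real obstacle beyond correctly citing those two results.
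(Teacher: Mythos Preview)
Your proof is correct and follows essentially the same approach as the paper: both reduce to Theorem~\ref{thm:ltg} together with Neeman's identification $\langle k(p)\rangle=\langle\Gamma_pR\rangle$ in $\Drm(R)$. You have simply made explicit, via the localizing subcategory $\Lrm_X$, the implication the paper leaves as a one-line remark; this is exactly the device the paper itself uses later in the proof of Theorem~\ref{thm:main}.
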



We now turn to analyzing the localizing subcategories of $\Drm(RC)$ in terms of the `fibres' 
$\Drm(k(p)C)$ for $p\in \Spec R$. 
Let $\Lscr$ be the class defined in the following way. It comes equipped with a surjective map
$\Lscr\xrightarrow{s}\Spec R$, and the fiber over $p\in\Spec R$ is the class of
localizing subcategories of $\Drm(k(p)C)$. We will define a pair of maps
\begin{equation*}
    \xymatrix{
        \left\{\text{localizing subcategories $\Lrm$ of $\Drm(RC)$}\right\} \ar@<1ex>[r]^-f    &
        \left\{\text{sections $l$ of $\Lscr\xrightarrow{s}\Spec R$}\right\}\ar@<1ex>[l]^-g
    }.
\end{equation*}
In order to define the maps in the most convenient manner we require a little preparation.

\begin{lemma}\label{lem:basechangesplits}
	If $X$ is in the image of the forgetful functor $\Drm(k(p)C)\rightarrow \Drm(RC)$ then $k(p)\otimes_RX$ is 
	a direct sum of suspensions of $X$. In particular, the base change functor 
	$\Drm(RC) \rightarrow\Drm(k(p)C)$ is essentially surjective up to summands.
	\begin{proof}
			Let $X$ be as in the statement i.e., $X$ is a complex of $k(p)C$-modules regarded as a 
			complex of $RC$-modules. Then
			\begin{equation*}
				k(p)\otimes_RX\we \left(k(p)\otimes_Rk(p)\right)\otimes_{k(p)}X
			\end{equation*}
			is a coproduct of suspensions of $X$ since $k(p)\otimes_Rk(p)$ is a coproduct of suspensions of $k(p)$. 
			As the base change functor $\Drm(RC) \rightarrow\Drm(k(p)C)$ is just $k(p)\otimes_R -$ the final statement of the lemma is an immediate consequence.
	\end{proof}
\end{lemma}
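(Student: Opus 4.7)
The plan is to exploit the fact that $X$ already carries a $k(p)C$-module structure, so that base change from $R$ to $k(p)$ leaves one copy of $k(p)$ free to float past $X$. More precisely, I would first observe that for $X$ lifted from $\Drm(k(p)C)$, there is a natural equivalence
\begin{equation*}
k(p)\otimes_R X \;\we\; \bigl(k(p)\otimes_R k(p)\bigr)\otimes_{k(p)} X,
\end{equation*}
which is just associativity of the (derived) tensor product, using that the $RC$-action on $X$ factors through the surjection $RC \to k(p)C$. One needs to check this is legitimate at the derived level, which is standard: either choose a termwise $R$-flat resolution of the left-hand $k(p)$ (computing both tensor products) or, equivalently, invoke the projection formula for the base-change adjunction along $R \to k(p)$.

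Next I would use that $k(p)$ is a field, so every complex of $k(p)$-modules is formal and splits as a direct sum of shifts of $k(p)$. Applied to the complex of $k(p)$-vector spaces $k(p)\otimes_R k(p)$, this gives $k(p)\otimes_R k(p) \we \bigoplus_i \Sigma^{n_i} k(p)$ in $\Drm(k(p))$. Substituting into the display above and using that $k(p)\otimes_{k(p)} X \we X$, we obtain $k(p)\otimes_R X \we \bigoplus_i \Sigma^{n_i} X$ in $\Drm(k(p)C)$, which is the first assertion.

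For the second assertion, given any $Y \in \Drm(k(p)C)$, let $X$ be its image under the forgetful functor $\Drm(k(p)C)\to \Drm(RC)$. Then the identity $R\to k(p)$ yields a summand isomorphic to $Y$ in $k(p)\otimes_R k(p)$ (the one indexed by $n_i=0$, coming from the unit), and tensoring with $X$ exhibits $Y$ as a direct summand of the base change $k(p)\otimes_R X$. Hence every $Y\in\Drm(k(p)C)$ is a summand of an object in the essential image of base change.

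The only real subtlety is justifying the first displayed equivalence at the derived level; once one is comfortable with the projection formula for base change of $R$-linear categories (equivalently, that extension of scalars is monoidal and has restriction as lax monoidal right adjoint), the rest is bookkeeping. The field hypothesis on $k(p)$ makes the splitting of $k(p)\otimes_R k(p)$ automatic, so no finiteness or flatness condition on $R$ enters.
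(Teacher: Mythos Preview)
Your proposal is correct and follows essentially the same argument as the paper: both reduce to the equivalence $k(p)\otimes_R X \we (k(p)\otimes_R k(p))\otimes_{k(p)} X$ and then use that $k(p)\otimes_R k(p)$ splits as a coproduct of suspensions of $k(p)$ since $k(p)$ is a field. You are slightly more explicit than the paper in justifying the derived-level associativity and in pointing to the unit map for the degree-zero summand, but these are elaborations rather than a different approach.
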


\begin{lemma}\label{lem:fdefined}
	Let $\Lrm$ be a localizing subcategory of $\Drm(RC)$. Then $\add(k(p)\otimes_R\Lrm)$, the closure of $k(p)\otimes_R\Lrm$ under
	summands and isomorphisms in $\Drm(k(p)C)$, is a localizing subcategory of $\Drm(k(p)C)$.
	\begin{proof}
		It is evident $\add(k(p)\otimes_R\Lrm)$ is closed under suspensions and coproducts in $\Drm(k(p)C)$ as derived base change 
		is exact and coproduct preserving. Thus it is sufficient to show $\add(k(p)\otimes_R\Lrm)$ is closed under triangles. Suppose 
		$X\rightarrow Y\rightarrow Z\rightarrow$ is a triangle with $X,Y\in \add(k(p)\otimes_R\Lrm)$. Without loss of generality we may 
		assume $X,Y\in k(p)\otimes_R\Lrm$. By Lemma~\ref{lem:tensorclosure} the restrictions of $X$ and $Y$ lie in $\Lrm$, 
		so we deduce the restriction of $Z$ lies in $\Lrm$. Hence $k(p)\otimes_RZ$ is in $k(p)\otimes_R\Lrm$ and 
		using Lemma~\ref{lem:basechangesplits} we see $Z$ is in $\add(k(p)\otimes_R\Lrm)$ proving the lemma.
	\end{proof}
\end{lemma}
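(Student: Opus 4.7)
The plan is to verify the three defining properties of a localizing subcategory for $\add(k(p)\otimes_R\Lrm)$: closure under suspensions, coproducts, and the third vertex of distinguished triangles. Closure under suspensions and coproducts is essentially automatic, since derived base change along $R\to k(p)$ is exact and commutes with coproducts, and passing to summand closure in $\Drm(k(p)C)$ preserves both. So the substantive content is to establish closure under triangles.

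The strategy is to combine Lemma \ref{lem:tensorclosure} with Lemma \ref{lem:basechangesplits}. The first tells us that $\Lrm$ absorbs tensoring with any object of $\Drm(R)$; in particular $k(p)\otimes_R X'\in\Lrm$, viewed via restriction of scalars, whenever $X'\in\Lrm$. The second tells us that base change followed by restriction splits: if an object already comes from $\Drm(k(p)C)$, then $k(p)\otimes_R Z$ is a coproduct of suspensions of $Z$, so $Z$ reappears as a summand of its own base change. Together these give a round-trip $\Drm(k(p)C)\to \Drm(RC)\to \Drm(k(p)C)$ which preserves membership in the respective subcategories up to summands.

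Concretely, given a triangle $X\to Y\to Z\to$ in $\Drm(k(p)C)$ with $X,Y\in\add(k(p)\otimes_R\Lrm)$, first reduce to the case $X,Y\in k(p)\otimes_R\Lrm$ by enlarging $X$ and $Y$ with their complementary summands (which only adds a split piece to the triangle and does not change whether $Z$ lies in $\add(k(p)\otimes_R\Lrm)$). Apply restriction of scalars to land in $\Drm(RC)$; the first two terms lie in $\Lrm$ by Lemma \ref{lem:tensorclosure}, and since $\Lrm$ is closed under triangles, so does the restriction of $Z$. Tensoring back with $k(p)$ places $k(p)\otimes_R Z$ in $k(p)\otimes_R\Lrm$, and Lemma \ref{lem:basechangesplits} then exhibits $Z$ as a direct summand of this object, placing $Z\in\add(k(p)\otimes_R\Lrm)$ as required.

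The main obstacle I anticipate is not conceptual but bookkeeping: keeping the restriction and base change functors cleanly separated and justifying the reduction from $\add(k(p)\otimes_R\Lrm)$ to $k(p)\otimes_R\Lrm$ for the two input terms of the triangle. Neither step is deep; the former is routine use of the adjunction and the latter follows from the fact that a distinguished triangle remains distinguished after direct summing with any split triangle.
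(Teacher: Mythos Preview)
Your proposal is correct and follows essentially the same argument as the paper: reduce to $X,Y\in k(p)\otimes_R\Lrm$, use Lemma~\ref{lem:tensorclosure} to see their restrictions lie in $\Lrm$, close up under triangles in $\Drm(RC)$, and then invoke Lemma~\ref{lem:basechangesplits} to recover $Z$ as a summand of $k(p)\otimes_R Z$. Your additional justification of the ``without loss of generality'' step via direct-summing with split triangles is a welcome elaboration of a point the paper leaves implicit.
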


The function $f$ is defined as follows: we set
$f(\Lrm)(p)=\add(k(p)\otimes_R\Lrm)$ which is localizing by Lemma~\ref{lem:fdefined}. Given a section $l$ of $s$, define $g(l)$ as the localizing
subcategory
\begin{equation*}
    \left\{X\in\Drm(RC)\;|\;\text{$k(p)\otimes_RX\in l(p)$ for all primes $p\in\Spec R$}\right\}.
\end{equation*}
There is another natural function
\begin{equation*}
    \xymatrix{
        \left\{\text{localizing subcategories $\Lrm$ of $\Drm(RC)$}\right\}    &
        \left\{\text{sections $l$ of $\Lscr\xrightarrow{s}\Spec
    R$}\right\}\ar[l]^-{g'}
    }
\end{equation*}
defined as follows:
let $g'$ be the function that
takes $l$ to the localizing subcategory generated by the objects $X$ of $l(p)$ for all $p$,
viewed as $RC$-modules in the natural way, i.e.\
\begin{displaymath}
g'(l) = \langle l(p) \; \vert \; p\in \Spec R\rangle.
\end{displaymath}

\begin{lemma}\label{lem:gfcontainment}
    If $\Lrm$ is a localizing subcategory of $\Drm(RC)$, then $g'(f(\Lrm))\subseteq\Lrm\subseteq g(f(\Lrm))$.
    \begin{proof}
        The inclusion $\Lrm\subseteq g(f(\Lrm))$ is clear:
				\begin{displaymath}
				g(f(\Lrm)) = \{X\in \Drm(RC)\; \vert \; k(p)\otimes_R X \in \add(k(p)\otimes_R \Lrm) \; \forall\: p\in \Spec R\} \supseteq \Lrm.
				\end{displaymath}
				To show the other inclusion, note that $g'(f(\Lrm))$ is generated by
        $k(p)\otimes_RX$ as $X$ ranges over the objects of $\Lrm$ and $p$ ranges over the
        primes of $R$. But, by Lemma~\ref{lem:tensorclosure}, these are all in $\Lrm$.
    \end{proof}
\end{lemma}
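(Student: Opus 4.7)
The plan is to verify the two inclusions separately; both should follow almost immediately from unwinding the definitions together with Lemma~\ref{lem:tensorclosure} and the fact that any localizing subcategory is closed under summands.

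For the inclusion $\Lrm \subseteq g(f(\Lrm))$, I would simply invoke the definitions. The subcategory $g(f(\Lrm))$ consists of those $X \in \Drm(RC)$ with $k(p) \otimes_R X \in f(\Lrm)(p) = \add(k(p) \otimes_R \Lrm)$ for every $p \in \Spec R$. If $X \in \Lrm$, then $k(p) \otimes_R X$ manifestly lies in $k(p) \otimes_R \Lrm$, hence in its additive closure; thus $X \in g(f(\Lrm))$.

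For the opposite inclusion $g'(f(\Lrm)) \subseteq \Lrm$, the strategy is to reduce to checking a containment on generators. By definition $g'(f(\Lrm)) = \langle f(\Lrm)(p) \mid p \in \Spec R\rangle$, where each $f(\Lrm)(p) = \add(k(p) \otimes_R \Lrm)$ is regarded as a subcategory of $\Drm(RC)$ via restriction of scalars. So it suffices to show that every summand of an object of the form $k(p) \otimes_R X$, with $X \in \Lrm$ and $p \in \Spec R$, already lies in $\Lrm$. By Lemma~\ref{lem:tensorclosure} we have $k(p) \otimes_R X \in \Lrm$, and since $\Lrm$ is localizing it is closed under summands, finishing the argument.

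The only potential source of confusion — and perhaps the only thing to be careful about — is bookkeeping: objects of $f(\Lrm)(p)$ live naturally in $\Drm(k(p)C)$, and one must remember that in the definition of $g'$ they are tacitly transported to $\Drm(RC)$ along the forgetful functor. Once this identification is made explicit, both inclusions are formal.
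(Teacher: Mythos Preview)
Your proof is correct and follows essentially the same approach as the paper: unwind the definitions for the first inclusion, and for the second use Lemma~\ref{lem:tensorclosure} to show the generators of $g'(f(\Lrm))$ lie in $\Lrm$. Your extra care about the additive closure and the implicit restriction-of-scalars identification is welcome but does not change the argument.
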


\begin{lemma}\label{lem:splitinjection}
    Suppose that $l$ is a section of $s$. Then, $f(g'(l))=l=f(g(l))$. In particular, $f$ is
    surjective.
    \begin{proof}
        The value of $f(g'(l))$ at a prime $p$ consists of the localizing subcategory of
        $\Drm(k(p)C)$ generated by the complexes $k(p)\otimes_RX$
        for $X\in l(p)$. By Lemma~\ref{lem:basechangesplits} $k(p)\otimes_RX$ 
        is a direct sum of suspensions of $X$ and thus $f(g'(l))=l$. Similarly $l=f(g(l))$, 
        proving the lemma.
    \end{proof}
\end{lemma}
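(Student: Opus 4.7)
My plan is to establish the two equalities $f(g'(l))=l$ and $f(g(l))=l$ separately, each at a fixed prime $p$, by showing that two full subcategories of $\Drm(k(p)C)$ coincide. Either equality immediately yields the surjectivity of $f$, so that is not a separate task.

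The equality $f(g(l))(p)=l(p)$ is almost tautological. The containment $\subseteq$ is immediate from the definition of $g(l)$: every $Y\in g(l)$ satisfies $k(p)\otimes_R Y\in l(p)$, and since $l(p)$ is already closed under summands and isomorphisms, the same inclusion persists after applying $\add$. For the reverse containment, given $X\in l(p)$ viewed as an $RC$-module via restriction, I would verify that $X\in g(l)$ and then exhibit $X$ as a direct summand of $k(p)\otimes_R X$.

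The equality $f(g'(l))(p)=l(p)$ requires a little more. For $\subseteq$ I would observe that the full subcategory $\{Y\in\Drm(RC)\mid k(p)\otimes_R Y\in l(p)\}$ is localizing (base change is exact and preserves coproducts), and argue that it contains all the generators of $g'(l)$, namely $\bigcup_q l(q)$. For $q=p$ this uses Lemma~\ref{lem:basechangesplits}, which writes $k(p)\otimes_R X$ as a coproduct of suspensions of $X$, visibly in $l(p)$. For $q\neq p$ I would use the derived tensor vanishing $k(p)\otimes_R^{\Lrm} k(q)\we 0$ to conclude $k(p)\otimes_R X\we 0$ for any $X$ in the essential image of $\Drm(k(q)C)\to\Drm(RC)$. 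The reverse containment $l(p)\subseteq\add(k(p)\otimes_R g'(l))$ proceeds as in the previous paragraph.

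The substantive inputs are the pair of standard facts about residue fields over a noetherian commutative ring: $k(p)\otimes_R^{\Lrm} k(q)\we 0$ when $p\neq q$, and $k(p)$ is a direct summand of $k(p)\otimes_R^{\Lrm} k(p)$. The first follows by picking an element lying in exactly one of $p,q$, which acts invertibly on one residue field and as zero on the other. The second follows because $k(p)$ is a commutative ring over itself, so the multiplication map $k(p)\otimes_R^{\Lrm} k(p)\to k(p)$ is a retraction of the unit; tensoring this retraction with $X\in l(p)$ produces the desired splitting $X\to k(p)\otimes_R X\to X$. Neither input is difficult, and I do not anticipate any substantial obstacle beyond making these observations precise.
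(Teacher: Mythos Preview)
Your argument is correct and follows the same route as the paper's proof, only spelled out in more detail. The paper's one-paragraph proof asserts that $f(g'(l))(p)$ is the localizing subcategory generated by $k(p)\otimes_R X$ for $X\in l(p)$ and then invokes Lemma~\ref{lem:basechangesplits}; implicit in that assertion is exactly the vanishing $k(p)\otimes_R^{\Lrm} k(q)\simeq 0$ for $p\neq q$ that you make explicit, and your elementary justification (an element of one prime not in the other acts as zero on one residue field and invertibly on the other) is the standard one. Your use of the retraction $k(p)\to k(p)\otimes_R^{\Lrm} k(p)\to k(p)$ to exhibit $X$ as a summand of $k(p)\otimes_R X$ is slightly weaker than what Lemma~\ref{lem:basechangesplits} gives (a full decomposition into suspensions of $X$), but it is all that is needed here. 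One small point: in your treatment of $f(g(l))$, the step ``verify that $X\in g(l)$'' for $X\in l(p)$ again requires the same vanishing at primes $q\neq p$; you have the ingredient available, just be sure to invoke it there as well.
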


Our goal is to show that $g'(f(\Lrm))=\Lrm=g(f(\Lrm))$. This will prove that $g$ and $f$ are
inverse bijections and so gives a description of the lattice of localizing subcategories 
of $\Drm(RC)$ in terms of the corresponding derived categories over the residue fields 
of $\Spec R$.

\section{Proof of the main theorem}
This section is dedicated to proving $g'(f(L))=L=g(f(L))$.

Write $\Gamma_p\Drm(RC)$ for the localizing subcategory consisting of objects $X$ supported
at $p\in\Spec R$ i.e., those $X$ satisfying $k(q)\otimes_RX \we 0$ for $q\neq p$. Equivalently, 
one can describe $\Gamma_p\Drm(RC)$ as the essential image of $\Gamma_pR\otimes_R-$ in $\Drm(RC)$. 
We can restrict $f$ to the class of localizing subcategories of $\Gamma_p\Drm(RC)$.

\begin{proposition}\label{prop:main}
    The following are equivalent:
    \begin{enumerate}
        \item   the functions $f$ and $g$ are inverse bijections;
        \item   the restrictions $f_p$
            and $g_p$
            \begin{equation*}
                \xymatrix{
                    \left\{\text{localizing subcategories of $\Gamma_p\Drm(RC)$}\right\}
                    \ar@<1ex>[r]^-{f_p}    &
                    \left\{\text{localizing subcategories of
                    $\Drm(k(p)C)$}\right\}\ar@<1ex>[l]^-{g_p}
                }
            \end{equation*}
            are inverse bijections for all primes $p$;
        \item   for every prime ideal $p$ in $\Spec R$ and for every object $X$ of $\Gamma_p\Drm(RC)$, the localizing subcategories
            $\langle k(p)\otimes_RX\rangle$ and $\langle X\rangle$ are the same.
    \end{enumerate}
    \begin{proof}
        Clearly (1) implies (2). That (2) implies (3) follows from the fact that
        the localizing subcategories $\langle X\rangle$ and $\langle k(p)\otimes_RX\rangle$
        have the same image under $f_p$. Since $f$ is surjective, to prove that (3) implies
        (1), it suffices to prove that (3) implies $f$ is injective. Assuming this for a moment,
        Lemma~\ref{lem:splitinjection} says that both $g$ and $g'$ are inverses for $f$,
        which must then coincide.

        Assume now that $\Lrm$ is a localizing subcategory of $\Drm(RC)$ and that $X\in\Lrm$.
        It suffices to show that $X\in g'(f(\Lrm))$ since we have the other containment by Lemma~\ref{lem:gfcontainment}. 
				Under the assumption (3), $\Gamma_pR\otimes_RX\in g'(f(\Lrm))$ for every prime ideal
        $p$ in $\Spec R$ because $k(p)\otimes_R\Gamma_pR\otimes_RX\iso k(p)\otimes_RX$. Hence there is a containment of 
				localizing subcategories
				\begin{displaymath}
				\langle \Gamma_pR\otimes_RX\; \vert \; p\in \Spec R\rangle \subseteq g'(f(\Lrm)).
				\end{displaymath}
        By Theorem~\ref{thm:ltg}, $X$ lies in $\langle \Gamma_pR\otimes_RX\;
        \vert \; p\in \Spec R\rangle$, so $X\in g'(f(\Lrm))$ completing the proof.
    \end{proof}
\end{proposition}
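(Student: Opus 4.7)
The plan is to prove the cycle of implications (1) $\Rightarrow$ (2) $\Rightarrow$ (3) $\Rightarrow$ (1), with the last being the substantive step; the first two reduce to straightforward bookkeeping with the lemmas already established.

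For (1) $\Rightarrow$ (2) I would simply observe that, under the bijection $f$, a localizing subcategory $\Lrm$ lies in $\Gamma_p\Drm(RC)$ precisely when the section $f(\Lrm)$ is zero outside $p$, so the global bijection restricts fibrewise at each prime. For (2) $\Rightarrow$ (3), fix $X\in\Gamma_p\Drm(RC)$; both $\langle X\rangle$ and $\langle k(p)\otimes_R X\rangle$ live in $\Gamma_p\Drm(RC)$, and each is sent by $f_p$ to the localizing subcategory of $\Drm(k(p)C)$ generated by $k(p)\otimes_R X$. For $\langle k(p)\otimes_R X\rangle$ this uses Lemma~\ref{lem:basechangesplits}: $k(p)\otimes_R(k(p)\otimes_R X)$ is a coproduct of suspensions of $k(p)\otimes_R X$. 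Injectivity of $f_p$ then forces $\langle X\rangle=\langle k(p)\otimes_R X\rangle$.

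The main work is (3) $\Rightarrow$ (1). By Lemma~\ref{lem:splitinjection}, $g$ (and $g'$) is a right inverse of $f$, so $f$ is surjective; hence it suffices to show $f$ is injective, or equivalently, combining with Lemma~\ref{lem:gfcontainment}, that $\Lrm\subseteq g'(f(\Lrm))$ for every localizing subcategory $\Lrm$. Given $X\in\Lrm$, Theorem~\ref{thm:ltg} gives $\langle X\rangle=\langle \Gamma_pR\otimes_R X\mid p\in\Spec R\rangle$, so it is enough to place each $\Gamma_pR\otimes_R X$ in $g'(f(\Lrm))$. Since $\Gamma_pR\otimes_R X$ lies in $\Gamma_p\Drm(RC)$, hypothesis (3) yields $\langle \Gamma_pR\otimes_R X\rangle=\langle k(p)\otimes_R\Gamma_pR\otimes_R X\rangle$, and because $k(p)$ is itself supported at $p$ we have $k(p)\otimes_R\Gamma_pR\otimes_R X\we k(p)\otimes_R X$. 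The object $k(p)\otimes_R X$ belongs to $g'(f(\Lrm))$ essentially by definition of $g'$, so $X\in g'(f(\Lrm))$ and $f$ is injective. Both $g$ and $g'$ must then coincide with $f^{-1}$, establishing (1).

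The only real subtlety I anticipate is the handoff $k(p)\otimes_R\Gamma_pR\otimes_R X\we k(p)\otimes_R X$, but this is immediate from the tensor-idempotence of $\Gamma_pR$ together with the fact, already noted in the excerpt via \cite{neeman-chromatic}, that $k(p)$ lies in the essential image of $\Gamma_pR\otimes_R-$. Everything else is formal manipulation using Lemmas~\ref{lem:tensorclosure}, \ref{lem:basechangesplits}, \ref{lem:fdefined}, \ref{lem:gfcontainment}, \ref{lem:splitinjection}, and Theorem~\ref{thm:ltg}.
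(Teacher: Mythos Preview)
Your proof is correct and follows essentially the same route as the paper's: the cycle (1) $\Rightarrow$ (2) $\Rightarrow$ (3) $\Rightarrow$ (1), with the last implication established by showing $\Lrm\subseteq g'(f(\Lrm))$ via Theorem~\ref{thm:ltg} and the identification $k(p)\otimes_R\Gamma_pR\otimes_RX\we k(p)\otimes_RX$. Your write-up is in fact a bit more explicit than the paper's in justifying (1) $\Rightarrow$ (2) and the key isomorphism in (3) $\Rightarrow$ (1), but the logical structure and the ingredients used are identical.
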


The following observation is our main `theorem'.

\begin{theorem}\label{thm:main}
Let $p$ be a prime ideal of $R$ and $X$ an object of $\Gamma_p\Drm(RC)$. Then $X\in \langle k(p)\otimes_RX\rangle$ and hence
\begin{displaymath}
\langle k(p)\otimes_R X\rangle = \langle X \rangle.
\end{displaymath}
	\begin{proof}
		Let $X$ be as in the lemma and consider the following full subcategory of $\Drm(R)$
		\begin{displaymath}
		\Mrm = \{E\in \Drm(R)\; \vert \; E\otimes_R X \in \langle k(p)\otimes_R X\rangle\}.
		\end{displaymath}
		As $\langle k(p)\otimes_R X\rangle$ is a localizing subcategory it follows that $\Mrm$ is 
		also localizing (this is relatively straightforward but a proof can be found in \cite{stevenson-support}*{Lemma~3.8}). 
		It is immediate from the definition that $k(p)\in \Mrm$ and so $\langle k(p)\rangle \subseteq \Mrm$. By Neeman's 
		classification result \cite{neeman-chromatic} we have $\Gamma_pR \in \langle k(p)\rangle$ and hence $\Gamma_pR$ 
		also lies in $\Mrm$. Thus $\Gamma_pR\otimes_RX \in \langle k(p)\otimes_RX\rangle$ and it only remains to 
		observe that $X\in \Gamma_p\Drm(RC)$ implies $\Gamma_pR\otimes_RX \we X$.
	\end{proof}
\end{theorem}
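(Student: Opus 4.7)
The plan is to exploit the tensor action of $\Drm(R)$ on $\Drm(RC)$ (established earlier in the excerpt) together with Neeman's classification of localizing subcategories of $\Drm(R)$. Since $X$ lies in $\Gamma_p\Drm(RC)$, the essential image of $\Gamma_pR\otimes_R -$, we have $\Gamma_pR\otimes_R X \simeq X$. So it is enough to show the single containment $\Gamma_pR\otimes_R X \in \langle k(p)\otimes_R X\rangle$; the equality $\langle k(p)\otimes_R X\rangle = \langle X\rangle$ then follows because $k(p)\otimes_R X \in \langle X\rangle$ already by Lemma~\ref{lem:tensorclosure}.

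To extract that containment, I would introduce the auxiliary full subcategory
\begin{equation*}
\Mrm = \{E\in \Drm(R) \; \vert \; E\otimes_R X \in \langle k(p)\otimes_R X\rangle\} \subseteq \Drm(R).
\end{equation*}
The functor $-\otimes_R X\colon \Drm(R)\to\Drm(RC)$ is exact and preserves coproducts, and $\langle k(p)\otimes_R X\rangle$ is localizing in $\Drm(RC)$, so $\Mrm$ is a localizing subcategory of $\Drm(R)$. Tautologically $k(p)\in \Mrm$, hence $\langle k(p)\rangle_{\Drm(R)} \subseteq \Mrm$.

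Now I would invoke Neeman's classification of localizing subcategories of $\Drm(R)$ for $R$ noetherian, which in particular gives that $\Gamma_pR$ and $k(p)$ generate the same localizing subcategory of $\Drm(R)$ (as already used in the corollary to Theorem~\ref{thm:ltg}). Therefore $\Gamma_pR \in \langle k(p)\rangle \subseteq \Mrm$, which by definition of $\Mrm$ yields $\Gamma_pR\otimes_R X \in \langle k(p)\otimes_R X\rangle$. Combined with the identification $\Gamma_pR\otimes_R X \simeq X$ coming from $X\in \Gamma_p\Drm(RC)$, this gives $X\in \langle k(p)\otimes_R X\rangle$, completing the proof.

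There is no serious obstacle here: the argument is essentially a direct application of the tensor-action formalism of \cite{stevenson-support} together with Neeman's theorem. The only point that deserves a sentence of justification is that $\Mrm$ is localizing, which is a standard consequence of the exactness and coproduct-preservation of the action (cf.\ \cite{stevenson-support}*{Lemma~3.8}); everything else is formal.
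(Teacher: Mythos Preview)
Your proposal is correct and follows essentially the same argument as the paper's proof: both introduce the auxiliary localizing subcategory $\Mrm\subseteq\Drm(R)$, observe $k(p)\in\Mrm$, invoke Neeman's result to get $\Gamma_pR\in\langle k(p)\rangle\subseteq\Mrm$, and conclude using $\Gamma_pR\otimes_RX\simeq X$. The only cosmetic difference is that you state the identification $\Gamma_pR\otimes_RX\simeq X$ up front and explicitly note the reverse inclusion via Lemma~\ref{lem:tensorclosure}, whereas the paper leaves that implicit.
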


\begin{corollary}\label{cor:main}
Let $R$ be a commutative noetherian ring and $C$ a small category. Then the assignments
\begin{equation*}
    \xymatrix{
        \left\{\text{localizing subcategories $\Lrm$ of $\Drm(RC)$}\right\} \ar@<1ex>[r]^-f    &
        \left\{\text{sections $l$ of $\Lscr\xrightarrow{s}\Spec R$}\right\}\ar@<1ex>[l]^-g
    }
\end{equation*}
are inverse to one another.
	\begin{proof}
		By Proposition~\ref{prop:main} it is sufficient to verify condition (3) i.e., that for every $X\in \Gamma_p\Drm(RC)$ 
		we have $X\in \langle k(p)\otimes_R X\rangle$. This is precisely the content of the theorem and so we see $f$ and $g$ 
		are inverse.
	\end{proof}
\end{corollary}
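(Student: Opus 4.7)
The plan is to deduce the corollary directly from Proposition~\ref{prop:main} combined with Theorem~\ref{thm:main}. Proposition~\ref{prop:main} already reduces the claim that $f$ and $g$ are inverse bijections to verifying the local condition (3): for every prime $p \in \Spec R$ and every $X \in \Gamma_p\Drm(RC)$, the equality $\langle X\rangle = \langle k(p)\otimes_R X\rangle$ holds. So the only real work is to establish this single-prime statement.

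For the local statement, the strategy is to use the tensor action of $\Drm(R)$ on $\Drm(RC)$ to transport Neeman's classification of localizing subcategories of $\Drm(R)$ into our setting. Specifically, for fixed $X \in \Gamma_p\Drm(RC)$, I would introduce the full subcategory
\[
\Mrm = \{E \in \Drm(R) \mid E \otimes_R X \in \langle k(p)\otimes_R X\rangle\}
\]
of $\Drm(R)$. Using that $-\otimes_R X$ is exact and coproduct-preserving, together with the reasoning of Lemma~\ref{lem:tensorclosure} applied inside $\langle k(p)\otimes_R X\rangle$, one checks that $\Mrm$ is a localizing subcategory of $\Drm(R)$. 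By construction it contains $k(p)$.

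By Neeman's classification, $\Gamma_p R$ lies in the localizing subcategory generated by $k(p)$, and hence $\Gamma_p R \in \Mrm$; unwinding the definition, this says $\Gamma_p R \otimes_R X \in \langle k(p)\otimes_R X\rangle$. Since $X \in \Gamma_p\Drm(RC)$ we have $\Gamma_p R \otimes_R X \simeq X$, so $X \in \langle k(p)\otimes_R X\rangle$. The reverse containment is automatic from Lemma~\ref{lem:tensorclosure}. This establishes condition (3) of Proposition~\ref{prop:main}, and the corollary follows.

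The main obstacle, handled by Theorem~\ref{thm:main}, is precisely the local equality $\langle X\rangle = \langle k(p)\otimes_R X\rangle$ for $X$ supported at a single prime; once this is in hand, the corollary is a formal consequence of Proposition~\ref{prop:main}. The key conceptual point is that although $k(p)$ need not be tensor idempotent, it still generates the same localizing subcategory of $\Drm(R)$ as $\Gamma_p R$, which lets us replace the genuine idempotent $\Gamma_p R$ by the residue field at the cost of passing to generated subcategories.
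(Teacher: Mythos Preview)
Your proposal is correct and follows exactly the paper's approach: reduce via Proposition~\ref{prop:main} to condition~(3), and then invoke Theorem~\ref{thm:main}. In fact, the argument you sketch for the local equality (introducing the localizing subcategory $\Mrm \subseteq \Drm(R)$ of objects $E$ with $E\otimes_R X \in \langle k(p)\otimes_R X\rangle$, observing $k(p)\in\Mrm$, and using Neeman's result to deduce $\Gamma_p R\in\Mrm$) is precisely the paper's proof of Theorem~\ref{thm:main}.
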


\begin{remark}
As noted in Remark~\ref{rem:lazy} our results are also valid in the case $D$ is a flat $R$-linear category and we consider 
$\Drm(\Mod_R D)$. One just needs to replace $k(p)C$ by $k(p)\otimes_R D$, the base change of $D$ to $k(p)$; the arguments 
don't change.
\end{remark}

\section{Dynkin quivers}

In this section we give a concrete application of the formalism above by considering the case that $C$ is the path category of a simply laced 
Dynkin quiver.
Let $Q$ be a quiver whose underlying graph is a simply laced Dynkin diagram.
We can naturally view $Q$ as a poset i.e., a small category and apply our result to the study of the derived category, $\Drm(RQ)$, of representations of $Q$ over $R$.
This yields the following extension of work of Ingalls and Thomas
\cite{ingalls-thomas}, where we refer the reader for information about
noncrossing partitions.

\begin{corollary}\label{cor:ADEclassification}
Let $R$ be a commutative noetherian ring, $Q$ a simply laced Dynkin quiver, and denote by $RQ$ the $R$-linear path algebra of $Q$. There is an 
isomorphism of lattices
\begin{displaymath}
\xymatrix{
        \left\{\text{localizing subcategories of $\Drm(RQ)$}\right\} \ar@<1ex>[r]^-f    &
        \left\{\text{functions $\Spec R \rightarrow \mathrm{NC}(Q)$}\right\}\ar@<1ex>[l]^-g
    },
\end{displaymath}
where $\mathrm{NC}(Q)$ denotes the lattice of noncrossing partitions associated to $Q$.
	\begin{proof}
		Corollary~\ref{cor:main} applies so it just remains to demonstrate there is a bijection
		\begin{displaymath}
			\{\text{sections of $\Lscr\stackrel{s}{\rightarrow} \Spec R$}\} \we \Hom(\Spec R, \mathrm{NC}(Q)).
		\end{displaymath}
		This follows from \cite{krause-report}*{Theorem~6.10} which shows, without restriction on the field $k$, that 
		there is a bijection between the lattice of thick subcategories of $\Drm^b(kQ)$ and $\mathrm{NC}(Q)$. As $kQ$ is hereditary 
		and of finite representation type, $\Drm(kQ)$ is pure-semisimple i.e., every object is a direct sum 
		of compact objects, and so we deduce a bijection between the lattice of localizing subcategories of $\Drm(kQ)$ 
		and $\mathrm{NC}(Q)$. Thus sections of $\Lscr\rightarrow \Spec R$ are nothing but functions from $\Spec R$ 
		to $\mathrm{NC}(Q)$.
	\end{proof}
\end{corollary}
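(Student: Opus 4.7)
The plan is to combine Corollary~\ref{cor:main} with a field-uniform classification of localizing subcategories of $\Drm(kQ)$. By Corollary~\ref{cor:main}, localizing subcategories of $\Drm(RQ)$ correspond to sections of $\Lscr\xrightarrow{s}\Spec R$, whose fibre over $p$ is the class of localizing subcategories of $\Drm(k(p)Q)$. Thus it suffices to produce, for every field $k$, a lattice isomorphism between the localizing subcategories of $\Drm(kQ)$ and the finite lattice $\mathrm{NC}(Q)$ which is independent of $k$; a section of $s$ then amounts to a function $\Spec R\to\mathrm{NC}(Q)$.

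For the field-level classification, I would first observe that, since $Q$ is simply laced Dynkin, the path algebra $kQ$ is hereditary and of finite representation type by Gabriel's theorem. The derived category $\Drm(kQ)$ is therefore pure-semisimple: every complex decomposes as a coproduct of shifts of finite-dimensional indecomposable $kQ$-modules, each of which is compact. This allows one to pass between localizing and thick subcategories, namely the assignment $\Lrm\mapsto\Lrm\cap\Drm^{\perf}(kQ)$ should be an inclusion-preserving bijection between the localizing subcategories of $\Drm(kQ)$ and the thick subcategories of $\Drm^{\perf}(kQ)=\Drm^b(kQ)$, with inverse given by closing a thick subcategory under coproducts.

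It then remains to identify $\mathrm{NC}(Q)$ with the lattice of thick subcategories of $\Drm^b(kQ)$, which is exactly \cite{krause-report}*{Theorem~6.10} (stated over an arbitrary field, hence giving the desired $k$-independence). Assembling these pieces, sections of $\Lscr\to\Spec R$ coincide with functions $\Spec R\to\mathrm{NC}(Q)$, and the bijection $(f,g)$ of Corollary~\ref{cor:main} becomes the claimed isomorphism of lattices. The main obstacle is justifying that pure-semisimplicity really forces every localizing subcategory to be the coproduct closure of its compact part, so that the bijection with $\mathrm{NC}(Q)$ lifts from $\Drm^b(kQ)$ to $\Drm(kQ)$; the Dynkin assumption, via finite representation type, is exactly what makes this step go through, and everything else is citation.
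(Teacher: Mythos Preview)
Your proposal is correct and follows essentially the same route as the paper: apply Corollary~\ref{cor:main}, then use that $kQ$ is hereditary of finite representation type to see $\Drm(kQ)$ is pure-semisimple, so localizing subcategories of $\Drm(kQ)$ correspond to thick subcategories of $\Drm^b(kQ)$, which by \cite{krause-report}*{Theorem~6.10} correspond to $\mathrm{NC}(Q)$ uniformly in $k$. The only difference is cosmetic: you spell out the passage between localizing and thick subcategories via $\Lrm\mapsto\Lrm\cap\Drm^{\perf}(kQ)$ more explicitly than the paper does.
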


\begin{remark}
One can also use Lemma~\ref{lem:splitinjection} and Krause's extension of a result by Igusa and Schiffler \cite{krause-report}*{Theorem~6.10} to 
get partial information on the lattice of localizing subcategories of $\Drm(RQ)$ for an arbitrary quiver $Q$.
\end{remark}

In this situation we can also obtain a classification of the thick subcategories of $\Drm^\perf(RQ)$, the category of perfect complexes of $RQ$-modules. Recall that $\Drm^\perf(RQ)$ is the full subcategory of $\Drm(RQ)$ consisting of those objects quasi-isomorphic to a bounded complex of finitely generated projective modules; it is a thick subcategory and is the subcategory of compact objects in $\Drm(RQ)$. As in the case of $\Drm^\perf(R)$, the thick subcategories of $\Drm^\perf(RQ)$ are given by a sublattice of the lattice of localizing subcategories defined by a certain specialization closure condition.

\begin{definition}
	We call a function $\sigma\colon \Spec R \to \mathrm{NC}(Q)$ \emph{specialization closed} if whenever $p\subseteq q$ we have $\sigma(p)\leq \sigma(q)$ in $\mathrm{NC}(Q)$. 
\end{definition}

\begin{remark}
	This recovers the usual notion of specialization closure of subsets of $\Spec R$ when $Q = A_1$ and so $\mathrm{NC}(Q) = \{0,1\}$. Moreover, returning to the general simply laced case, if $\Lrm$ is a localizing subcategory with $f(\Lrm)$ specialization closed then for $p\subseteq q$ we have
	\begin{displaymath}
		k(p)\otimes \Lrm \neq 0 \quad \Rightarrow \quad k(q)\otimes \Lrm \neq 0.
	\end{displaymath}
\end{remark}

We will show that specialization closed functions $\Spec R \to \mathrm{NC}(Q)$ classify smashing subcategories of $\Drm(RQ)$ and that the telescope conjecture holds. Combining these two results gives the claimed classification result for thick subcategories of $\Drm^\perf(RQ)$.
We begin by recalling a useful fact and then present the easiest part of the argument.

\begin{lemma}\label{lem:bill}
Let $p$ be a prime ideal of $R$ and let $M$ be an indecomposable $k(p)Q$-module with dimension vector $\alpha$. Then there is a rigid lattice $\widetilde{M}$ over $RQ$, i.e.\ $\widetilde{M}$ is $R$-free and $\Ext^1_{RQ}(\widetilde{M}, \widetilde{M}) = 0$, with rank vector $\alpha$. Moreover, for any $q\in \Spec R$ the module $k(q)\otimes \widetilde{M}$ is the unique indecomposable $k(q)Q$-module with dimension vector $\alpha$. In particular,
	\begin{displaymath}
		k(p)\otimes \widetilde{M} \cong M.
	\end{displaymath}
	\begin{proof}
		This is a (very) special case of a result of Crawley-Boevey \cite{crawley-boevey-rigid}*{Theorem~1}.
	\end{proof}
\end{lemma}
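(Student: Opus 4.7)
The plan is to reduce the statement entirely to Crawley-Boevey's theorem on integral rigid representations of quivers. Its key input, for a simply laced Dynkin quiver $Q$ and a positive root $\alpha$ of the associated root system, is the existence of a rigid $\ZZ Q$-lattice $\widetilde{N}$ that is $\ZZ$-free with rank vector $\alpha$ and whose specialization $k \otimes_\ZZ \widetilde{N}$, for any field $k$, is the unique indecomposable $kQ$-module of dimension vector $\alpha$ (existence and uniqueness over a field being Gabriel's theorem).

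Granting this, I would set $\widetilde{M} := R \otimes_\ZZ \widetilde{N}$. The $R$-freeness and rank vector claims are immediate. For any $q \in \Spec R$,
\begin{displaymath}
k(q) \otimes_R \widetilde{M} \cong k(q) \otimes_\ZZ \widetilde{N},
\end{displaymath}
which by Crawley-Boevey is the unique indecomposable $k(q)Q$-module with dimension vector $\alpha$; specializing to $q = p$ gives $k(p) \otimes \widetilde{M} \cong M$.

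The remaining check is rigidity. Since $\ZZ Q$ is hereditary and $\widetilde{N}$ is finitely generated, pick a short projective resolution $0 \to P_1 \to P_0 \to \widetilde{N} \to 0$ by finitely generated $\ZZ Q$-modules (which are in particular $\ZZ$-free). Tensoring with $R$ preserves exactness, and for finitely generated projective $P_i$ one has $\Hom_{RQ}(R \otimes P_i,\, R \otimes \widetilde{N}) \cong R \otimes_\ZZ \Hom_{\ZZ Q}(P_i, \widetilde{N})$; right exactness of tensor then yields
\begin{displaymath}
\Ext^1_{RQ}(\widetilde{M}, \widetilde{M}) \cong R \otimes_\ZZ \Ext^1_{\ZZ Q}(\widetilde{N}, \widetilde{N}) = 0.
\end{displaymath}

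The main obstacle, were one to attempt a proof from scratch, is Crawley-Boevey's construction of $\widetilde{N}$ itself, and especially the verification that its reductions modulo arbitrary prime ideals of $\ZZ$ remain indecomposable; once that input is granted, the passage from $\ZZ$ to $R$ is routine base-change bookkeeping.
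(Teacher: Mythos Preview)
Your proposal is correct and takes essentially the same approach as the paper, which simply cites Crawley-Boevey's theorem; you have merely spelled out the routine base change from $\ZZ$ to $R$ that the paper leaves implicit in the phrase ``(very) special case.''
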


\begin{lemma}\label{lem:sccompact}
Let $\sigma\colon \Spec R \to\mathrm{NC}(Q)$ be specialization closed. Then the localizing subcategory $\Lrm = g(\sigma)$ is generated by objects of $\Drm^\perf(RQ)$. 
	\begin{proof}
	We prove this by just writing down a (rather redundant) generating set for $\Lrm$. For each prime ideal $p$ such that $k(p)\otimes \Lrm\neq 0$ let $M(p)$ be a compact generator for the localizing subcategory of $\Drm(k(p)Q)$ generated by $k(p)\otimes \Lrm$. Since $M(p)$ is a finite sum of (suspensions of) indecomposable modules in $\Drm(k(p)Q)$ we can lift it to a lattice $\widetilde{M(p)}$ in $\Drm(RQ)$ using Lemma~\ref{lem:bill}. In particular, it is easily seen that $\widetilde{M(p)}$ is compact in $\Drm(RQ)$. Set
	\begin{displaymath}
		G = \{K(p)\otimes \widetilde{M(p)} \; \vert \; p\in \Spec R \; \text{with}\; k(p)\otimes \Lrm \neq 0\} \quad \text{and} \quad \Lrm' = \langle G\rangle,
	\end{displaymath}
	where $K(p)$ denotes the Koszul complex for $p$ defined by
    \begin{equation*}
        K(p)=\bigotimes_{i=1}^r\mathrm{cone}(R\xrightarrow{f_i} R)
    \end{equation*}
    where $p$ is generated by $f_1,\ldots,f_r$. (Recall that this implicitly
    means the derived tensor product over $R$.)
    Since $K(p)\in \Drm^\perf(R)$ and $\widetilde{M(p)}\in \Drm^\perf(RQ)$, the set $G$ consists of compact objects by \cite{stevenson-support}*{Lemma~4.6}.
	
	For primes $p \subseteq q\in \Spec R$ the object $k(q) \otimes (K(p)\otimes \widetilde{M(p)})$ is a finite sum of suspensions of copies of the $k(q)Q$-module $k(q)\otimes \widetilde{M(p)}$. This latter module can be described as follows: each indecomposable summand of $M(p)$ corresponds to an indecomposable $k(q)Q$-module, namely the indecomposable with the same dimension vector, and $k(q)\otimes \widetilde{M(p)}$ is the corresponding sum of these indecomposable $k(q)Q$-modules. In particular, $M(p)$ and $k(q)\otimes \widetilde{M(p)}$ correspond to the same element of $\mathrm{NC}(Q)$. If, on the other hand, $p\nsubseteq q$ then $k(q) \otimes (K(p)\otimes \widetilde{M(p)}) = 0$.
	
	Putting everything together we see that
		\begin{align*}
			\langle k(q)\otimes \Lrm' \rangle &= \langle k(q) \otimes K(p)\otimes \widetilde{M(p)} \; \vert \; p\in \Spec R \; \text{with} \; k(p)\otimes \Lrm \neq 0\rangle \\
			&= \langle k(q)\otimes \widetilde{M(q)}\rangle \\
			&= \langle M(q) \rangle \\
			&= \langle k(q)\otimes \Lrm\rangle,
		\end{align*}
		where the second equality follows from the computation in the preceding paragraph together with specialization closure of $\sigma$, and the third and fourth equalities are by definition of $M(q)$ and $\widetilde{M(q)}$. This shows that $f(\Lrm) = f(\Lrm')$ and thus, by the classification of localizing subcategories, $\Lrm = \Lrm'$. We have thus exhibited a set of generators $G\subseteq \Drm^\perf(RQ)$ for $\Lrm$. 
	\end{proof}
\end{lemma}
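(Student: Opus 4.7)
My plan is to exhibit an explicit set of compact generators for $\Lrm = g(\sigma)$ by lifting compact generators of each fiber $l(p)$ to objects of $\Drm^\perf(RQ)$ and then cutting them down to the correct support using Koszul complexes. Since $kQ$ has finite representation type for every field $k$, the fiber $l(p)\subseteq \Drm(k(p)Q)$ is generated by a compact object $M(p)$ that is a finite sum of shifts of indecomposable modules. Using Lemma~\ref{lem:bill} I would produce an $R$-free lift $\widetilde{M(p)}\in \Drm^\perf(RQ)$ whose base change to $k(q)$ at any prime $q$ is the unique $k(q)Q$-module with the same rank vector as $M(p)$.

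To control support I would tensor each lift with the Koszul complex $K(p) = \bigotimes_{i=1}^{r} \cone(R \xrightarrow{f_i} R)$ for a choice of generators of $p$. The object $K(p)\otimes \widetilde{M(p)}$ remains compact by \cite{stevenson-support}*{Lemma~4.6}. Base change to $k(q)$ vanishes whenever $p \not\subseteq q$ since some $f_i$ becomes a unit, while for $p \subseteq q$ every $f_i$ maps to zero and so $k(q)\otimes K(p)$ is a finite sum of shifts of $k(q)$; consequently $k(q)\otimes K(p)\otimes \widetilde{M(p)}$ is a finite sum of shifts of the unique $k(q)Q$-module whose rank vector matches that of $M(p)$.

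Now set $\Lrm' = \langle K(p)\otimes \widetilde{M(p)} \mid p\in \Spec R,\ \sigma(p)\neq 0\rangle$. To conclude $\Lrm = \Lrm'$, by Corollary~\ref{cor:main} it suffices to verify $f(\Lrm')(q) = l(q)$ for each prime $q$. Taking $p = q$ in the generating set yields $l(q)\subseteq f(\Lrm')(q)$. For the reverse containment the specialization closure hypothesis gives $\sigma(p)\leq \sigma(q)$ whenever $p\subseteq q$ contributes a nonzero fiber, and under the Ingalls-Thomas-Krause correspondence between $\mathrm{NC}(Q)$ and localizing subcategories of $\Drm(k(q)Q)$ this says that the $k(q)Q$-module with the rank vector of $M(p)$ lies in $l(q)$.

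The main obstacle I anticipate is this last matching step: I need the assignment taking an indecomposable module to its element of $\mathrm{NC}(Q)$ to depend only on the rank vector, uniformly in the ground field, so that the noncrossing partition of $k(q)\otimes \widetilde{M(p)}$ really equals $\sigma(p)$. Once this field-independent rigidity of the Ingalls-Thomas-Krause bijection is in hand, the remainder reduces to the Koszul complex computations above together with a direct application of Corollary~\ref{cor:main}.
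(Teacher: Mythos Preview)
Your proposal is correct and follows essentially the same approach as the paper: lift compact generators of each fiber via Lemma~\ref{lem:bill}, tensor with Koszul complexes $K(p)$ to control support, and then compare $f(\Lrm')$ with $f(\Lrm)$ fiberwise using specialization closure and Corollary~\ref{cor:main}. The obstacle you flag---that the element of $\mathrm{NC}(Q)$ attached to an indecomposable depends only on its dimension vector and not on the ground field---is exactly the point the paper uses (and leaves implicit) when asserting that $M(p)$ and $k(q)\otimes \widetilde{M(p)}$ correspond to the same noncrossing partition; it is justified by the field-independent bijection of \cite{krause-report}*{Theorem~6.10} already invoked in Corollary~\ref{cor:ADEclassification}.
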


We now continue with proving that the specialization closed functions $\Spec R \to
\mathrm{NC}(Q)$ classify smashing subcategories of $\Drm(RQ)$.
Combined with the above lemma this proves the telescope conjecture and classifies the thick subcategories of $\Drm^\perf(RQ)$.

Let us fix a smashing subcategory $\Srm$ of $\Drm(RQ)$, i.e.\ we have a localization sequence
	\begin{displaymath}
		\xymatrix{
		\Srm \ar[r]<0.5ex>^-{i_*} \ar@{<-}[r]<-0.5ex>_-{i^!} & \Drm(RQ) \ar[r]<0.5ex>^-{j^*} \ar@{<-}[r]<-0.5ex>_-{j_*} & \Srm^\perp
		}
	\end{displaymath}
where $i^!$ and $j_*$ are the right adjoints of the inclusion functors $i_*$
and the localization functor $j^*$ respectively and all of these functors preserve coproducts.
In particular, $\Srm^\perp$ is also a localizing subcategory of $\Drm(RQ)$.
In order to prove the result indicated above we start with two elementary
lemmas.

\begin{lemma}\label{lem:loccommutes}
Let $\Srm$ be as above. For any $Y\in \Drm(R)$ and $X\in \Drm(RQ)$ we have canonical isomorphisms
	\begin{displaymath}
		i_*i^!(Y\otimes X) \cong Y\otimes i_*i^!X \quad \text{and} \quad j_*j^*(Y\otimes X) \cong Y\otimes j_*j^*X.
	\end{displaymath}
		\begin{proof}
			Consider the localization triangle for $X$
				\begin{displaymath}
					i_*i^!X \to X \to j_*j^*X \to \Sigma i_*i^!X.
				\end{displaymath}
			Acting on this triangle with $Y$ gives a new triangle
				\begin{displaymath}
					Y\otimes i_*i^!X \to Y\otimes X \to Y\otimes j_*j^*X \to \Sigma(Y\otimes i_*i^!X).
				\end{displaymath}
			By Lemma~\ref{lem:tensorclosure} both $\Srm$ and $\Srm^\perp$ are closed under the $\Drm(R)$ action and so $Y\otimes i_*i^!X \in \Srm$ and $Y\otimes j_*j^*X \in \Srm^\perp$. The claimed isomorphisms follow immediately from the uniqueness of localization triangles.
		\end{proof}
\end{lemma}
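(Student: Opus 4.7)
The plan is to invoke the uniqueness of localization triangles, exploiting the fact that the $\Drm(R)$-action preserves both $\Srm$ and its right orthogonal $\Srm^\perp$ thanks to Lemma~\ref{lem:tensorclosure}. The point is that tensoring the localization triangle of $X$ with $Y$ produces another triangle whose outer terms land in the correct pieces of the recollement, so it has to coincide with the localization triangle of $Y\otimes X$.

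Concretely, I would first write down the canonical localization triangle for $X$,
\[
    i_*i^!X \to X \to j_*j^*X \to \Sigma i_*i^!X,
\]
which exists since $\Srm$ is smashing. Applying the exact, coproduct-preserving functor $Y \otimes_R (-)$ to this triangle yields a distinguished triangle
\[
    Y\otimes i_*i^!X \to Y\otimes X \to Y\otimes j_*j^*X \to \Sigma(Y\otimes i_*i^!X).
\]

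Next, I would observe that both $\Srm$ and $\Srm^\perp$ are localizing subcategories of $\Drm(RQ)$. For $\Srm$ this is by definition, while for $\Srm^\perp$ this uses that $\Srm$ is smashing, so $j_*$ preserves coproducts. Lemma~\ref{lem:tensorclosure} therefore applies to each, giving $Y\otimes i_*i^!X \in \Srm$ and $Y\otimes j_*j^*X \in \Srm^\perp$. By the uniqueness (up to canonical isomorphism) of localization triangles with respect to the recollement $(\Srm,\Srm^\perp)$, the above triangle must coincide with the localization triangle of $Y\otimes X$, yielding the desired isomorphisms $i_*i^!(Y\otimes X) \cong Y\otimes i_*i^!X$ and $j_*j^*(Y\otimes X) \cong Y\otimes j_*j^*X$.

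There is no substantive obstacle here; the argument is essentially formal once one has Lemma~\ref{lem:tensorclosure} in hand. The only mildly subtle observation is that smashing-ness of $\Srm$ is precisely what makes $\Srm^\perp$ localizing, so that the tensor closure lemma can be applied on both sides of the recollement simultaneously.
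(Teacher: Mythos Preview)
Your proposal is correct and follows essentially the same argument as the paper: tensor the localization triangle for $X$ with $Y$, use Lemma~\ref{lem:tensorclosure} to see that the outer terms land in $\Srm$ and $\Srm^\perp$ respectively, and conclude by uniqueness of localization triangles. The only difference is that you spell out explicitly why $\Srm^\perp$ is localizing (via smashing-ness), which the paper leaves implicit.
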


\begin{lemma}\label{lem:map}
		Let $p'\in \Spec R$ and $M$ and $N$ be indecomposable $k(p')Q$-modules satisfying 
		\begin{displaymath}
			\Hom_{k(p')Q}(M,N)\neq 0
		\end{displaymath}
		and denote choices of their respective rigid lattice lifts by $\widetilde{M}$ and  $\widetilde{N}$. Then given $p\subseteq q\in \Spec R$ we have
		\begin{displaymath}
			\Hom_{RQ}(E(k(p))\otimes \widetilde{M}, E(k(q))\otimes \widetilde{N}) \neq 0,
		\end{displaymath}
		where $E(k(p))$ and $E(k(q))$ denote the injective envelopes of the residue fields $k(p)$ and $k(q)$.
	\begin{proof}
		We know there are rigid lattice lifts of $M$ and $N$ by Lemma~\ref{lem:bill}.
        We can choose, using the classification of indecomposable modules over $Q$, a non-zero $\phi\colon M\to N$ given on each component by
        matrices involving only zero and identity maps. It is then clear we can
        lift it to a non-zero $\widetilde{\phi}\colon \widetilde{M}\to
        \widetilde{N}$ such that $\widetilde{\phi}$, like $\phi$, is given
        componentwise by matrices whose only entries are zero and identity
        maps. On the other hand, since $p\subseteq q$, there is a non-zero map
        $\psi\colon E(k(p))\to E(k(q))$. It is thus evident by our choice of
        $\tilde{\phi}$ that either of the equal composites in the commutative square
			\begin{displaymath}
				\xymatrix{
					E(k(q))\otimes \widetilde{M} \ar[r]^-{1\otimes \widetilde{\phi}} & E(k(q))\otimes \widetilde{N} \\
					E(k(p))\otimes \widetilde{M} \ar[u]^-{\psi\otimes 1} \ar[r]_-{1\otimes \widetilde{\phi}} & E(k(p))\otimes \widetilde{N} \ar[u]_-{\psi\otimes 1}
				}
			\end{displaymath}
		gives the desired non-zero morphism.
	\end{proof}
\end{lemma}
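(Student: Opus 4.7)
The plan is to construct an explicit nonzero morphism $E(k(p))\otimes \widetilde{M}\to E(k(q))\otimes \widetilde{N}$ by combining a lift of a chosen nonzero $\phi\colon M\to N$ with a nonzero map between injective envelopes induced by the inclusion $p\subseteq q$.

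For the first ingredient, I would appeal to the explicit description of indecomposable representations of a simply laced Dynkin quiver $Q$: via Gabriel's theorem and its refinements, each indecomposable over $k(p')Q$ admits a presentation in which the vector space at each vertex is a coordinate sum of copies of $k(p')$ and each arrow is given by a matrix whose entries are only $0$ and $1$. Morphisms between indecomposables can then be chosen componentwise with $0/1$ entries as well. The rigid lifts $\widetilde{M}$ and $\widetilde{N}$ provided by Lemma~\ref{lem:bill} admit the analogous form over $R$, so I may lift a chosen nonzero $\phi\colon M\to N$ to $\widetilde{\phi}\colon \widetilde{M}\to \widetilde{N}$ simply by re-interpreting the same $0/1$ matrices over $R$. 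The lift is nonzero because reduction modulo $p'$ recovers $\phi$.

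For the second ingredient, the composition $R/p\twoheadrightarrow R/q\hookrightarrow k(q)\hookrightarrow E(k(q))$ is nonzero, and since $R/p\hookrightarrow E(k(p))$ and $E(k(q))$ is injective, this map extends to a nonzero morphism $\psi\colon E(k(p))\to E(k(q))$.

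Finally, I would form the commutative square in the statement and argue that either composite is nonzero. Restricting to a vertex $v$ of $Q$ where some entry of $\widetilde{\phi}_v$ is an identity, the composite acts as a matrix whose entries are $0$s and copies of $\psi$, which is nonzero because $\psi$ is. I expect the main obstacle to be the first step: justifying that both $\phi$ and its lift can be put into $0/1$ form requires invoking the combinatorial structure of indecomposable $kQ$-modules for $Q$ of Dynkin type and verifying that the rigid lifts of Lemma~\ref{lem:bill} inherit that same form over $R$. Once this normalization is established, the remaining two steps are essentially formal.
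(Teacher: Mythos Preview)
Your proposal is correct and follows essentially the same approach as the paper's proof: both choose $\phi$ with $0/1$ matrices via the Dynkin classification, lift it to $\widetilde{\phi}$ over $R$ with the same form, produce a nonzero $\psi\colon E(k(p))\to E(k(q))$ from the containment $p\subseteq q$, and read off nonvanishing of the composite from the commutative square. Your write-up is in fact slightly more detailed than the paper's in constructing $\psi$ and in isolating a vertex to witness nonvanishing, and you have correctly identified the only delicate point, namely the $0/1$ normalization of the lifts.
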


Using this series of easy observations we can now dispose of the proof of the theorem in short order.

\begin{theorem}\label{thm:ADEsmashing}
Let $\Srm$ be a smashing subcategory of $\Drm(RQ)$ with notation as introduced above. Then $f(\Srm)\colon \Spec R \to \mathrm{NC}(Q)$ is specialization closed.
	\begin{proof}
		Fix $p\subseteq q \in \Spec R$ and an indecomposable module $M\in k(p)\otimes \Srm \subseteq \Drm(k(p)Q)$ with dimension vector $\alpha$. By Lemma~\ref{lem:bill} there is a lattice $\widetilde{M} \in \Drm^\perf(RQ)$ with $k(p)\otimes \widetilde{M} \cong M$ and $k(q)\otimes \widetilde{M}$ the unique indecomposable $k(q)Q$-module with dimension vector $\alpha$. We have to show that $k(q)\otimes \widetilde{M}$ is in $k(q)\otimes \Srm$. To this end consider the localization triangle
			\begin{displaymath}
				i_*i^!\widetilde{M} \to \widetilde{M} \to j_*j^*\widetilde{M} \to \Sigma i_*i^!\widetilde{M}.
			\end{displaymath}
		Pick an indecomposable summand $N$ of $k(q)\otimes j_*j^*\widetilde{M}$ and note that, by Lemma~\ref{lem:loccommutes}, $N\in \Srm^\perp$. We assume $N$ is non-zero as if $k(q)\otimes j_*j^*\widetilde{M}$ is zero then $k(q)\otimes \widetilde{M}$ is in $\Srm$ and we are done. Let $\widetilde{N}$ be a lattice lift of $N$. As we have assumed $k(q)\otimes j_*j^*\widetilde{M}$ is non-zero the morphism
			\begin{displaymath}
				\phi = k(q)\otimes \widetilde{M} \to k(q)\otimes j_*j^*\widetilde{M} \to N \cong k(q)\otimes \widetilde{N}
			\end{displaymath}
		must also be non-zero. Thus we can apply Lemma~\ref{lem:map} to produce a non-zero morphism
			\begin{displaymath}
				\gamma\colon E(k(p))\otimes \widetilde{M} \to E(k(q)) \otimes \widetilde{N}
			\end{displaymath}
		in $\Drm(RQ)$.
		
		On the other hand, by assumption $k(p)\otimes \widetilde{M}\in \Srm$ and $k(q)\otimes \widetilde{N}\in \Srm^\perp$. As both $\Srm$ and $\Srm^\perp$ are localizing, and for any prime ideal $p'$ we have $E(k(p'))\in \langle k(p')\rangle$, we see (as in the proof of Theorem~\ref{thm:main}) that
			\begin{displaymath}
				E(k(p))\otimes \widetilde{M} \in \Srm \quad \text{and} \quad E(k(q))\otimes \widetilde{N} \in \Srm^\perp.
			\end{displaymath}
		But this contradicts the existence of the non-zero morphism $\gamma$. Hence $N$ must have been zero, showing that $k(q)\otimes j_*j^*\widetilde{M} \cong 0$, which in turn implies (via Lemma~\ref{lem:loccommutes}) that $k(q)\otimes \widetilde{M} \in \Srm$ as desired.
	\end{proof}
\end{theorem}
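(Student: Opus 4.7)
The plan is to show that for $p\subseteq q$ in $\Spec R$, every indecomposable $M$ lying in $f(\Srm)(p)$ has the property that the unique indecomposable $k(q)Q$-module of the same dimension vector lies in $f(\Srm)(q)$; this suffices since each noncrossing partition is determined by the indecomposables it contains. By Lemma~\ref{lem:bill} I can realize both modules simultaneously as base changes of a single rigid lattice $\widetilde{M}\in\Drm^\perf(RQ)$, so that $k(p)\otimes\widetilde{M}\cong M$ and $k(q)\otimes\widetilde{M}$ is the targeted indecomposable over $k(q)Q$.

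My approach is to play $\Srm$ against $\Srm^\perp$ using the smashing localization triangle
\begin{equation*}
i_*i^!\widetilde{M}\to\widetilde{M}\to j_*j^*\widetilde{M}\to\Sigma i_*i^!\widetilde{M}.
\end{equation*}
Tensoring with $k(q)$ and invoking Lemma~\ref{lem:loccommutes} yields a triangle whose outer terms lie in $\Srm$ and $\Srm^\perp$ respectively. It therefore suffices to show $k(q)\otimes j_*j^*\widetilde{M}\we 0$, for then $k(q)\otimes\widetilde{M}\we k(q)\otimes i_*i^!\widetilde{M}\in\Srm$, which places it in $\add(k(q)\otimes\Srm)=f(\Srm)(q)$. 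Assume for contradiction that this term is non-zero and pick an indecomposable summand $N$ in $\Drm(k(q)Q)$ for which the composite $k(q)\otimes\widetilde{M}\to k(q)\otimes j_*j^*\widetilde{M}\to N$ is non-zero. (If every such composite vanished, the first map is zero, the triangle splits, and the indecomposability of $k(q)\otimes\widetilde{M}$ forces either $k(q)\otimes j_*j^*\widetilde{M}\we 0$, which is what we want, or $k(q)\otimes\widetilde{M}\in\Srm^\perp$, which can be ruled out by applying the argument below with $N=k(q)\otimes\widetilde{M}$.)

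Applying Lemma~\ref{lem:map} with $p'=q$ and $\widetilde{N}$ any rigid lattice lift of $N$ then produces a non-zero morphism $E(k(p))\otimes\widetilde{M}\to E(k(q))\otimes\widetilde{N}$ in $\Drm(RQ)$. To derive the contradiction I verify that $E(k(p))\otimes\widetilde{M}\in\Srm$ and $E(k(q))\otimes\widetilde{N}\in\Srm^\perp$: since $M=k(p)\otimes\widetilde{M}$ lies in $\Srm$ (using that $\Srm$ absorbs the $\Drm(R)$-action by Lemma~\ref{lem:tensorclosure}), the full subcategory $\{Y\in\Drm(R):Y\otimes\widetilde{M}\in\Srm\}$ is a localizing subcategory of $\Drm(R)$ containing $k(p)$, and hence contains all of $\langle k(p)\rangle\ni E(k(p))$ by Neeman's classification; the identical argument applied to $N\in\Srm^\perp$ handles $\widetilde{N}$. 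The main obstacle I expect is the bookkeeping around the summand $N$: namely, confirming that an indecomposable summand in $\Drm(k(q)Q)$ restricts to an object still lying in $\Srm^\perp\subseteq\Drm(RQ)$, and that the composite with the localization map can be arranged to be non-zero, so that the degenerate splittings of the triangle can be controlled.
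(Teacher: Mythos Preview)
Your proposal is correct and follows essentially the same route as the paper: lift $M$ to a rigid lattice $\widetilde{M}$, tensor the smashing localization triangle with $k(q)$, pick an indecomposable summand $N$ of $k(q)\otimes j_*j^*\widetilde{M}$ receiving a non-zero map, and then use Lemma~\ref{lem:map} together with $E(k(p')),k(p')\in\langle k(p')\rangle$ to produce a non-zero morphism from an object of $\Srm$ to one of $\Srm^\perp$. Your parenthetical case analysis for when all composites vanish is more elaborate than needed---once $B\to C$ is zero the triangle splits as $A\cong B\oplus\Sigma^{-1}C$, and $\Sigma^{-1}C\in\Srm\cap\Srm^\perp=0$ directly forces $C=0$ without invoking indecomposability of $B$---but this does not affect correctness.
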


This has the following, more palatable, consequences.

\begin{corollary}\label{cor:ADE1}
Let $R$ be a commutative noetherian ring and $Q$ a simply laced Dynkin quiver. Then $\Drm(RQ)$ satisfies the telescope conjecture: every smashing subcategory is generated by objects of $\Drm^\perf(RQ)$.
	\begin{proof}
		Suppose $\Srm$ is a smashing subcategory. Then by the classification given in Corollary~\ref{cor:ADEclassification} we know
			\begin{displaymath}
				\Srm = gf(\Srm).
			\end{displaymath}
		By Theorem~\ref{thm:ADEsmashing} the function $f(\Srm)$ is specialization closed and so by Lemma~\ref{lem:sccompact} we see $\Srm = gf(\Srm)$ is generated by objects of $\Drm^\perf(RQ)$ as claimed.
	\end{proof}
\end{corollary}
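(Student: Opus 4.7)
The plan is to obtain this as an essentially formal consequence of the results already established, by stringing together the classification in Corollary~\ref{cor:ADEclassification}, the structural constraint of Theorem~\ref{thm:ADEsmashing}, and the explicit construction of compact generators in Lemma~\ref{lem:sccompact}.

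More precisely, given a smashing subcategory $\Srm \subseteq \Drm(RQ)$, I would begin by applying Corollary~\ref{cor:ADEclassification} to $\Srm$, which, since $\Srm$ is in particular a localizing subcategory, identifies it with the function $f(\Srm)\colon \Spec R \to \mathrm{NC}(Q)$ in the sense that $\Srm = g(f(\Srm))$. Next, I would invoke Theorem~\ref{thm:ADEsmashing}, which uses the smashing hypothesis (via the localization triangle and the tensor compatibility established in Lemma~\ref{lem:loccommutes}, together with the non-vanishing argument from Lemma~\ref{lem:map}) to deduce that the function $f(\Srm)$ is specialization closed.

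Finally, specialization closure of $f(\Srm)$ lets me apply Lemma~\ref{lem:sccompact}, which explicitly produces a set $G \subseteq \Drm^\perf(RQ)$ of compact generators for $g(f(\Srm))$, assembled from Koszul complexes tensored with rigid lattice lifts of indecomposables supplied by Lemma~\ref{lem:bill}. Since $\Srm = g(f(\Srm)) = \langle G\rangle$, the subcategory $\Srm$ is generated by compact objects of $\Drm(RQ)$, which is precisely the telescope conjecture.

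The only nontrivial content is thus packaged into Theorem~\ref{thm:ADEsmashing} and Lemma~\ref{lem:sccompact}; the corollary itself requires no additional argument beyond citing these. There is no real obstacle at this stage — the proof is purely a synthesis, and the only thing to be careful about is to confirm that the classification bijection is being invoked in the right direction (using $\Srm = gf(\Srm)$ rather than $f(\Srm) = fg(f(\Srm))$, which would be tautological).
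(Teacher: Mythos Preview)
Your proposal is correct and follows exactly the same route as the paper: apply the classification bijection to write $\Srm = gf(\Srm)$, use Theorem~\ref{thm:ADEsmashing} to get that $f(\Srm)$ is specialization closed, and then invoke Lemma~\ref{lem:sccompact} to conclude. The additional commentary you give about the ingredients of the cited results is accurate but not needed for the corollary itself.
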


\begin{corollary}\label{cor:ADE2}
Let $R$ be a commutative noetherian ring and $Q$ a simply laced Dynkin quiver. There is an 
isomorphism of lattices
\begin{displaymath}
\xymatrix{
        \left\{\text{thick subcategories of $\Drm^\perf(RQ)$}\right\} \ar@<1ex>[r]^-f    &
        \left\{\text{specialization closed functions $\Spec R \rightarrow \mathrm{NC}(Q)$}\right\}\ar@<1ex>[l]^-g
    },
\end{displaymath}
where $\mathrm{NC}(Q)$ denotes the lattice of noncrossing partitions associated to $Q$.
	\begin{proof}
		Considering the classification of Corollary~\ref{cor:ADEclassification} and putting together Theorem~\ref{thm:ADEsmashing} and Lemma~\ref{lem:sccompact} gives a classification of the smashing subcategories of $\Drm(RQ)$ in terms of the specialization closed functions $\Spec R \to \mathrm{NC}(Q)$. By the previous corollary this is also the classification of the localizing subcategories of $\Drm(RQ)$ generated by objects of $\Drm^\perf(RQ)$. One obtains the isomorphism we have asserted in the statement in the standard way: by Thomason's localization theorem (see for example \cite{neeman-1996}*{Theorem~2.1}) the thick subcategories of $\Drm^\perf(RQ)$ are in order-preserving bijection with the localizing subcategories of $\Drm(RQ)$ which are generated by perfect complexes.
	\end{proof}
\end{corollary}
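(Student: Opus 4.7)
The plan is to assemble Corollary~\ref{cor:ADE2} from the three ingredients already in hand: the general classification of localizing subcategories (Corollary~\ref{cor:ADEclassification}), the identification of smashing subcategories with specialization closed functions, and Thomason's localization theorem relating thick subcategories of a compactly generated triangulated category to localizing subcategories generated by compacts. Since $\Drm^\perf(RQ)$ is precisely the subcategory of compact objects of $\Drm(RQ)$, Thomason's theorem gives an order-preserving bijection between thick subcategories of $\Drm^\perf(RQ)$ and localizing subcategories of $\Drm(RQ)$ that admit a compact set of generators. So the task reduces to identifying \emph{this} sublattice inside the lattice parametrized by functions $\Spec R \to \mathrm{NC}(Q)$.

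The natural strategy is to show the sublattice of compactly generated localizing subcategories coincides with the smashing subcategories, and then invoke Theorem~\ref{thm:ADEsmashing} together with Lemma~\ref{lem:sccompact} to identify the smashing subcategories with specialization closed functions. First I would observe that any localizing subcategory generated by compacts is automatically smashing (this is the easy direction, going back to Neeman). For the converse, Corollary~\ref{cor:ADE1} is exactly the statement of the telescope conjecture in this setting, which says every smashing subcategory of $\Drm(RQ)$ is generated by perfect complexes. Combining these, the image of the bijection of Corollary~\ref{cor:ADEclassification} restricted to smashing subcategories is, on the one hand, the set of functions coming from compactly generated localizing subcategories, and on the other hand (by Theorem~\ref{thm:ADEsmashing} and Lemma~\ref{lem:sccompact}) exactly the specialization closed functions $\Spec R \to \mathrm{NC}(Q)$.

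Assembling the diagram: given a thick subcategory $\Trm \subseteq \Drm^\perf(RQ)$, I would send it to the function $f(\mathrm{Loc}\, \Trm)$, where $\mathrm{Loc}\, \Trm$ is the localizing subcategory it generates in $\Drm(RQ)$; the resulting function is specialization closed by Theorem~\ref{thm:ADEsmashing} (applied to the smashing subcategory $\mathrm{Loc}\,\Trm$). Conversely, a specialization closed $\sigma$ is sent to $g(\sigma) \cap \Drm^\perf(RQ)$, and Lemma~\ref{lem:sccompact} guarantees that $g(\sigma)$ is generated by its intersection with $\Drm^\perf(RQ)$. That these two maps are mutually inverse and lattice isomorphisms then follows from Corollary~\ref{cor:ADEclassification} (the bijection $f \leftrightarrow g$ is already a lattice isomorphism at the level of all localizing subcategories) together with Thomason's theorem (the restriction of the bijection to the sublattice of compactly generated localizing subcategories is again a lattice isomorphism onto the thick subcategories of compacts).

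The only substantive obstacle is already resolved: the telescope conjecture step (Corollary~\ref{cor:ADE1}), whose proof requires the delicate argument in Theorem~\ref{thm:ADEsmashing} building on the rigid lattice lifts of Lemma~\ref{lem:bill} and the non-vanishing map construction of Lemma~\ref{lem:map}. Given those, Corollary~\ref{cor:ADE2} itself is a formal consequence, requiring no further computation.
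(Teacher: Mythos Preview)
Your proposal is correct and follows essentially the same approach as the paper: identify smashing subcategories with specialization closed functions via Theorem~\ref{thm:ADEsmashing} and Lemma~\ref{lem:sccompact}, use the telescope conjecture (Corollary~\ref{cor:ADE1}) to equate smashing subcategories with compactly generated localizing subcategories, and then apply Thomason's localization theorem to pass to thick subcategories of $\Drm^\perf(RQ)$. Your write-up is somewhat more explicit about the two directions of the smashing $\Leftrightarrow$ compactly generated equivalence and about how the maps are defined, but the logical structure is identical to the paper's proof.
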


\begin{example}
    Let $R$ be a local $1$-dimensional domain. So, $\Spec R$ consists of two
    points: a generic point $\eta$ and a closed point $x$. We will consider the
    case of $Q=A_2$ in the above examples. The lattice $\mathrm{NC}(A_2)$ consists of
    the noncrossing partitions of the set $\{1,2,3\}$. A noncrossing partition
    of a cyclically ordered set $S$ determined by an equivalence relation $\sim$ is one
    where $x<y<z<w$, $x\sim z$, and $y\sim w$ together imply that $x\sim y\sim
    z\sim w$.

    Figure~\ref{fig:nc3} shows the lattice $\mathrm{NC}(A_2)$, the lattice of
    noncrossing partitions of $\{1,2,3\}$. We display each partition as
    determined by its largest equivalence classes. The class of all
    localizing subcategories of $\Drm(RA_2)$ in this case is simply two copies
    of the lattice below, indexed on $\eta$ and $x$. Figure~\ref{fig:Rnc3}
    shows the lattice of specialization closed functions $\Spec
    R\rightarrow\mathrm{NC}(A_2)$, which by the results above is the lattice of
    thick subcategories of $\Drm^{\perf}(RA_2)$.

    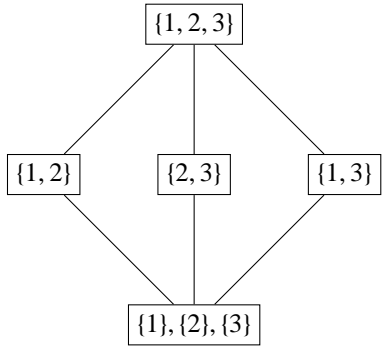
\begin{figure}[h]
        \centering
        \begin{tikzpicture}
            \node[draw,rectangle] (a) at (2,4) {$\prn{1,2,3}$};
            \node[draw,rectangle] (b) at (0,2) {$\prn{1,2}$};
            \node[draw,rectangle] (c) at (2,2) {$\prn{2,3}$};
            \node[draw,rectangle] (d) at (4,2) {$\prn{1,3}$};
            \node[draw,rectangle] (e) at (2,0) {$\prn{1},\prn{2},\prn{3}$};

            \draw[-]   (a) -- (b)
                        (a) -- (c)
                        (a) -- (d)
                        (b) -- (e)
                        (c) -- (e)
                        (d) -- (e);
        \end{tikzpicture}
        \caption{The lattice of noncrossing partitions of $\{1,2,3\}$. The
    coarser partitions are decreed to be bigger in the lattice structure.}
        \label{fig:nc3}
    \end{figure}

    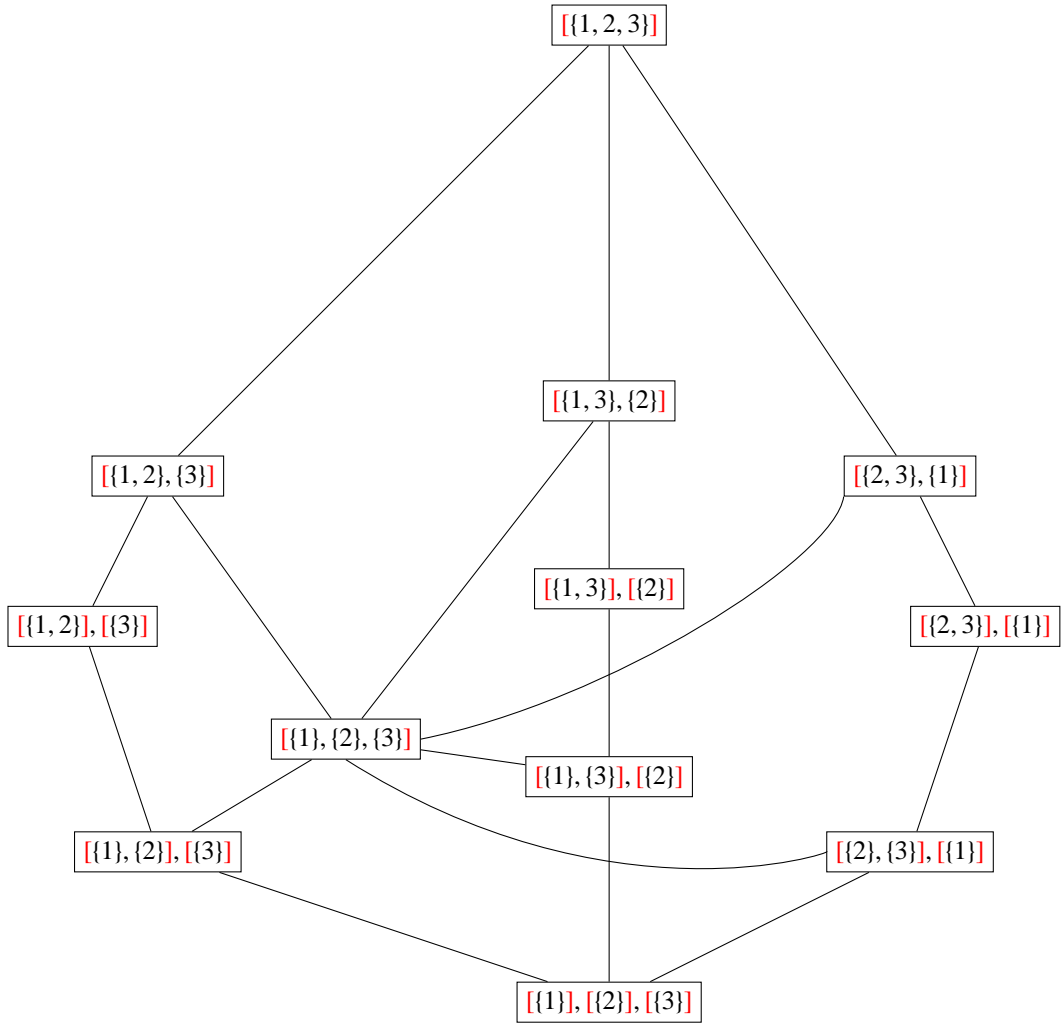
\begin{figure}[h]
        \centering
        \begin{tikzpicture}
            \node[draw,rectangle] (a) at (5,13) {$\rprn{\prn{1,2,3}}$};
            \node[draw,rectangle] (m) at (1.5,3.5) {$\rprn{\prn{1},\prn{2},\prn{3}}$};

            \node[draw,rectangle] (b) at (-1,7) {$\rprn{\prn{1,2},\prn{3}}$};
            \node[draw,rectangle] (c) at (-2,5) {$\rprn{\prn{1,2}},\rprn{\prn{3}}$};
            \node[draw,rectangle] (d) at (-1,2) {$\rprn{\prn{1},\prn{2}},\rprn{\prn{3}}$};

            \node[draw,rectangle] (e) at (5,8) {$\rprn{\prn{1,3},\prn{2}}$};
            \node[draw,rectangle] (f) at (5,5.5) {$\rprn{\prn{1,3}},\rprn{\prn{2}}$};
            \node[draw,rectangle] (g) at (5,3) {$\rprn{\prn{1},\prn{3}},\rprn{\prn{2}}$};
           
            \node[draw,rectangle] (h) at (9,7) {$\rprn{\prn{2,3},\prn{1}}$};
            \node[draw,rectangle] (i) at (10,5) {$\rprn{\prn{2,3}},\rprn{\prn{1}}$};
            \node[draw,rectangle] (j) at (9,2) {$\rprn{\prn{2},\prn{3}},\rprn{\prn{1}}$};

            \node[draw,rectangle] (k) at (5,0) {$\rprn{\prn{1}},\rprn{\prn{2}},\rprn{\prn{3}}$};

            \draw[-]    (a) -- (b)
                        (a) -- (e)
                        (a) -- (h)
                        (b) -- (c)
                        (c) -- (d)
                        (d) -- (k)
                        (e) -- (f)
                        (f) -- (g)
                        (g) -- (k)
                        (h) -- (i)
                        (i) -- (j)
                        (d) -- (m)
                        (g) -- (m)
                        (m) -- (b)
                        (m) -- (e)
                        (j) -- (k);

            \draw[-] (m.east) .. controls (5,4) and (8,5.9) .. (h.south west);
            \draw[-] (m.south) .. controls (5,1) and (8,2) .. (j.west);
            
        \end{tikzpicture}
        \caption{The lattice of specialization closed functions $\Spec
            R\rightarrow\mathrm{NC}(A_2)$ for $R$ a $1$-dimensional local
            domain. The partition given by the black parentheses is the
            noncrossing partition corresponding to the generic point $\eta$, while
            the partition determined by the red parentheses is the partition
            corresponding to the closed point $x$.}
        \label{fig:Rnc3}
    \end{figure}
\end{example}




\section{Towards telescopy}\label{sec:telescope}

 weahave seen in Corollary~\ref{cor:ADE1} the telescope conjecture holds for $\Drm(RQ)$ when $Q$ is an ADE quiver and $R$ is any commutative noetherian ring. Unfortunately we were not able to prove such a general statement for even arbitrary quivers, let alone arbitrary small categories. However, we do have some partial results and remarks which we present in this section which revolve around the following question.

\begin{question}\label{qn}
    Let $R$ be a noetherian commutative ring.
    Does the telescope conjecture hold for $\Drm(RC)$ when $C$ is an ordinary (not
    $R$-linear) category if it holds for $\Drm(k(p)C)$ for all $p\in\Spec R$?
\end{question}

We begin to answer this question by showing the bijection of Proposition~\ref{prop:main}(2) restricts to a bijection between the collections of smashing subcategories. Given a localizing subcategory $\Lrm$ of some triangulated category we will denote the associated acyclization and localization functors by $\Gamma_\Lrm$ and $L_\Lrm$ respectively.

\begin{remark}
	Throughout we will prove that some localizing subcategory $\Srm$ is smashing by exhibiting that the right orthogonal $\Srm^\perp$ is also localizing. 
	In order for this condition to be equivalent to $\Srm$ being smashing one needs to know the inclusion of $\Srm$ admits a right adjoint. In all 
	of the cases we consider $\Srm$ will clearly be generated by a set of objects, for instance it will be the localizing subcategory 
	generated by the image of some other smashing subcategory under an exact functor, and so the existence of the adjoint follows from Brown 
    representability. Indeed, in this case one has a generating set as any smashing subcategory of a compactly generated triangulated category has a set of generators by \cite{KrauseLoc}*{Theorem~7.4.1} and so one can apply Brown representability for well-generated categories as in \cite{NeeCat} (or see \cite{KrauseLoc}*{Theorem~5.1.1}). Thus we will suppress this part of the arguments throughout.
\end{remark}

Let us for the moment fix some $p\in \Spec R$ and denote by $i^*$ the functor $k(p)\otimes(-)\colon\Gamma_p\Drm(RC) \to \Drm(k(p)C)$ and 
its right adjoint by $i_*$.

\begin{lemma}\label{lem:smashingpres1}
	Suppose $\Srm$ is a smashing subcategory of $\Gamma_p\Drm(RC)$ and set
	\begin{displaymath}
		\Trm = f(\Srm) = \add( k(p)\otimes \Srm) \quad\text{and}\quad \Trm' = f(\Srm^\perp) = \add(k(p) \otimes \Srm^\perp).
	\end{displaymath}
	Then $\Trm'$ is the right orthogonal of $\Trm$ and hence $\Trm$ is a smashing subcategory of $\Drm(k(p)C)$.
	\begin{proof}
		If $X\in \Trm'$ then there is, by definition, some $\bar{X}\in \Srm^\perp$ such that $X$ is a summand of $i^*\bar{X}$. Given $Y\in \Trm$, 
		which we can assume to be of the form $i^*\bar{Y}$ with $\bar{Y}\in \Srm$, we have
		\begin{displaymath}
			\Hom(i^*\bar{Y}, i^*\bar{X}) \cong \Hom(\bar{Y}, i_*i^*\bar{X}).
		\end{displaymath}
		This latter hom-set is zero as $\bar{Y}\in \Srm$ and $i_*i^*\bar{X} \in \Srm^\perp$ by the closure of localizing subcategories under 
		the $\Drm(R)$ action. Thus $\Trm' \subseteq \Trm^\perp$.
		
		On the other hand, if $\Hom(i^*\Srm, Z) = 0$ for some $Z\in \Drm(k(p)C)$, then by adjunction $i_*Z\in \Srm^\perp$. Hence 
		$i^*i_*Z\in \Trm'$ and we know, by Lemma~\ref{lem:basechangesplits},
        $Z$ is a summand of $i^*i_*Z$. So $Z$ is in $\Trm'$, proving that $\Trm^\perp \subset \Trm'$ and completing the argument.
	\end{proof}
\end{lemma}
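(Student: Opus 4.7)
The plan is to establish the two containments $\Trm' \subseteq \Trm^\perp$ and $\Trm^\perp \subseteq \Trm'$ by exploiting the adjunction $i^* \dashv i_*$ between $\Gamma_p\Drm(RC)$ and $\Drm(k(p)C)$. Since $\Trm = \add(k(p)\otimes\Srm)$ and $\Trm' = \add(k(p)\otimes\Srm^\perp)$ by construction, every object of $\Trm$ (respectively $\Trm'$) is a summand of $i^*\bar{Y}$ for some $\bar{Y}\in\Srm$ (respectively $\bar{X}\in\Srm^\perp$), and I would freely pass to such representatives when testing vanishing of Hom-sets.

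For the inclusion $\Trm' \subseteq \Trm^\perp$, I would pick $X$ a summand of $i^*\bar{X}$ with $\bar{X}\in\Srm^\perp$ and $Y$ a summand of $i^*\bar{Y}$ with $\bar{Y}\in\Srm$, and compute $\Hom(i^*\bar{Y}, i^*\bar{X}) \cong \Hom(\bar{Y}, i_*i^*\bar{X})$ via adjunction. The key input is that $i_*i^*\bar{X}$, which may be identified with $k(p)\otimes_R \bar{X}$ after restriction of scalars, lies in $\Srm^\perp$: this follows from Lemma~\ref{lem:tensorclosure} applied to the localizing subcategory $\Srm^\perp$, which is closed under the $\Drm(R)$-action. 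Then $\bar{Y}\in\Srm$ is orthogonal to $i_*i^*\bar{X}\in\Srm^\perp$, so the Hom-set vanishes and hence $\Hom(Y,X)=0$.

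For the reverse inclusion $\Trm^\perp \subseteq \Trm'$, given $Z\in\Drm(k(p)C)$ with $\Hom(\Trm, Z) = 0$, I would use adjunction to conclude $\Hom(\Srm, i_*Z) = 0$, so $i_*Z\in\Srm^\perp$ and therefore $i^*i_*Z\in\Trm'$. The final step is to invoke Lemma~\ref{lem:basechangesplits}: the object $i^*i_*Z = k(p)\otimes i_*Z$ is a direct sum of suspensions of $Z$, so $Z$ is a summand of $i^*i_*Z$, whence $Z\in\Trm'$ since $\Trm'$ is closed under summands by definition.

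The main subtle point I expect is the appeal to $\Drm(R)$-closure of $\Srm^\perp$ in the forward inclusion; the rest is formal adjunction manipulation combined with the splitting provided by Lemma~\ref{lem:basechangesplits}. Once the equality $\Trm^\perp = \Trm'$ is established, the right orthogonal $\Trm^\perp$ is automatically a localizing subcategory of $\Drm(k(p)C)$ by Lemma~\ref{lem:fdefined}, and combined with the preceding remark on the existence of a right adjoint to the inclusion this shows that $\Trm$ is a smashing subcategory of $\Drm(k(p)C)$.
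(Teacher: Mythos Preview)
Your proposal is correct and follows essentially the same argument as the paper: both inclusions are handled via the adjunction $i^*\dashv i_*$, with the forward inclusion using closure of $\Srm^\perp$ under the $\Drm(R)$-action and the reverse inclusion using Lemma~\ref{lem:basechangesplits} to recognize $Z$ as a summand of $i^*i_*Z$. Your additional remark spelling out why $\Trm$ is smashing (via Lemma~\ref{lem:fdefined} and the remark on Brown representability) is a helpful elaboration of what the paper leaves implicit.
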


Now we fix a smashing subcategory $\Trm$ of $\Drm(k(p)C)$ and set
\begin{displaymath}
	\Srm = g(\Trm) = \langle i_*\Trm\rangle \quad\text{and}\quad \Srm' = g(\Trm^\perp) = \langle i_*\Trm^\perp\rangle.
\end{displaymath}
We wish to show $\Srm$ is smashing with right orthogonal $\Srm'$. We prove this
in the following four statements.

\begin{lemma}\label{lem:gen}
The subcategories $\Srm$ and $\Srm'$ generate $\Gamma_p\Drm(RC)$ i.e., we have an equality
\begin{displaymath}
	\langle \Srm\cup \Srm'\rangle = \Gamma_p\Drm(RC).
\end{displaymath}
	\begin{proof}
		Let $X$ be an object of $\Gamma_p\Drm(RC)$. By Theorem~\ref{thm:main} we know $X$ is in the localizing subcategory $\langle i_*i^*X\rangle$. 
		We have a localization triangle in $\Drm(k(p)C)$
		\begin{displaymath}
			\Gamma_\Trm i^*X \to i^*X \to L_\Trm i^*X \to\Sigma\Gamma_\Trm i^*X
		\end{displaymath}
		where $\Gamma_\Trm i^*X \in \Trm$ and $L_\Trm i^*X \in \Trm^\perp$. Applying $i_*$ gives a triangle in $\Drm(RC)$
		\begin{displaymath}
			i_*\Gamma_\Trm i^*X \to i_*i^*X \to i_*L_\Trm i^*X
            \to\Sigma i_*\Gamma_T i^*X
		\end{displaymath}
		with $i_*\Gamma_\Trm i^*X \in \Srm$ and $i_*L_\Trm i^*X \in \Srm'$ by definition. Thus $X\in \langle i_*i^*X\rangle \subseteq \langle \Srm\cup \Srm'\rangle$ 
		as claimed.
	\end{proof}
\end{lemma}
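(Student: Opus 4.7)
The plan is to take an arbitrary $X \in \Gamma_p\Drm(RC)$ and exhibit it as an iterated extension (up to coproducts, summands, and triangles) of objects in $\Srm$ and $\Srm'$. The natural way to do this is to transport $X$ down to $\Drm(k(p)C)$ via $i^*$, apply the smashing localization of $\Trm$ there, and then pull the resulting triangle back up along $i_*$. Since $\Srm$ and $\Srm'$ were defined precisely as $\langle i_*\Trm\rangle$ and $\langle i_*\Trm^\perp\rangle$, the outer terms of the pulled-back triangle will land in $\Srm$ and $\Srm'$ respectively.

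Concretely, first I would form the localization triangle
\begin{displaymath}
\Gamma_\Trm i^*X \to i^*X \to L_\Trm i^*X \to \Sigma\Gamma_\Trm i^*X
\end{displaymath}
in $\Drm(k(p)C)$, which exists because $\Trm$ is smashing and hence gives rise to acyclization and localization functors. Apply the exact functor $i_*$ to obtain a triangle in $\Gamma_p\Drm(RC)$ whose outer terms lie in $\Srm$ and $\Srm'$ by definition of $g$. This shows $i_*i^*X$ lies in $\langle \Srm\cup\Srm'\rangle$.

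The remaining step, and essentially the only subtle one, is to pass from $i_*i^*X$ back to $X$ itself. This is exactly what Theorem~\ref{thm:main} provides: for $X\in \Gamma_p\Drm(RC)$ one has $X\in\langle k(p)\otimes_RX\rangle = \langle i_*i^*X\rangle$, since the object $k(p)\otimes_RX$ of $\Drm(RC)$ agrees with $i_*i^*X$ under restriction of scalars. Combining these inclusions gives $X\in\langle\Srm\cup\Srm'\rangle$ and, since $X$ was arbitrary, the desired equality follows.

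The main conceptual point — and the only place something nontrivial is used — is the appeal to Theorem~\ref{thm:main}, which recovers $X$ from $i_*i^*X$ inside the localizing subcategory it generates; everything else is a formal manipulation with the adjunction $(i^*,i_*)$ and the definitions of $\Srm$ and $\Srm'$. No Brown-representability or smashing-type argument is needed at this step, so I would not expect any serious obstacle beyond keeping the identifications between $i^*X$, $i_*i^*X$, and $k(p)\otimes_RX$ straight.
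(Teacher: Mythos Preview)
Your proposal is correct and follows essentially the same argument as the paper: form the localization triangle for $i^*X$ with respect to $\Trm$, push it forward along $i_*$ to see $i_*i^*X\in\langle\Srm\cup\Srm'\rangle$, and then invoke Theorem~\ref{thm:main} to conclude $X\in\langle i_*i^*X\rangle$. The only difference is cosmetic ordering; the paper states the inclusion $X\in\langle i_*i^*X\rangle$ first and then produces the triangle, while you do the reverse.
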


\begin{lemma}\label{lem:orthog}
There is a containment of triangulated subcategories $\Srm'\subseteq \Srm^\perp$.
	\begin{proof}
		It is enough to check that for every $t\in \Trm$ and $t'\in \Trm^\perp$ we have
		\begin{displaymath}
			\Hom(i_*t, i_*t') = 0.
		\end{displaymath}
		The required vanishing follows from the isomorphisms
		\begin{displaymath}
			\Hom(i_*t, i_*t') \cong \Hom(i^*i_*t, t') \cong \Hom(\coprod_\lambda \Sigma^{n_\lambda}t, t') \cong \prod_\lambda \Hom(\Sigma^{n_\lambda}t, t') = 0,
		\end{displaymath}
		where the first isomorphism is by adjunction, the second is by Lemma~\ref{lem:basechangesplits}, and the final hom-set vanishes by assumption.
	\end{proof}
\end{lemma}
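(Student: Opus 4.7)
The plan is to prove $\Srm' \subseteq \Srm^\perp$ by reducing, in standard fashion, to a Hom-vanishing between generators. The pivotal computation is that for $t \in \Trm$ and $t' \in \Trm^\perp$ one has $\Hom_{\Gamma_p\Drm(RC)}(i_*t, i_*t') = 0$. I would obtain this by combining the adjunction $i^* \dashv i_*$ with Lemma~\ref{lem:basechangesplits}: adjunction rewrites the Hom group as $\Hom_{\Drm(k(p)C)}(i^*i_*t, t')$, and the lemma identifies $i^*i_*t$ with a direct sum $\coprod_\lambda \Sigma^{n_\lambda}t$ of suspensions of $t$. Distributing Hom over this coproduct yields a product $\prod_\lambda \Hom(\Sigma^{n_\lambda}t, t')$, each factor vanishing by the assumption that $t' \in \Trm^\perp$.

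From this computation I would bootstrap to the full containment. For fixed $t' \in \Trm^\perp$, the full subcategory $\{X \in \Gamma_p\Drm(RC) : \Hom(X, i_*t') = 0\}$ is closed under coproducts (since Hom turns coproducts in the first variable into products), cones, and suspensions; hence it is localizing. It contains $i_*\Trm$ by the Hom-vanishing computation, so it contains $\Srm = \langle i_*\Trm\rangle$. This shows $i_*t' \in \Srm^\perp$ for each $t' \in \Trm^\perp$. Now invoking the fact that $\Srm^\perp$ is itself a localizing subcategory of $\Gamma_p\Drm(RC)$ --- which holds because $\Srm$ is generated by a set, as discussed in the opening remark of this section and guaranteed by Brown representability --- we conclude that $\Srm^\perp \supseteq \langle i_*\Trm^\perp\rangle = \Srm'$.

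The only non-formal ingredient is the coproduct-closure of $\Srm^\perp$ used in the final step: closure of the test subcategory under coproducts in the first variable of Hom is automatic, but closure of $\Srm^\perp$ under coproducts in the second variable is exactly the point at which Brown representability enters. Everything else is bookkeeping with the adjunction $i^* \dashv i_*$ and Lemma~\ref{lem:basechangesplits}, so I expect this closure property to be the only conceptual obstacle worth flagging.
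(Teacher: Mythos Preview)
Your computation that $\Hom(i_*t, i_*t') = 0$ via the adjunction $i^* \dashv i_*$ and Lemma~\ref{lem:basechangesplits} is exactly the paper's argument, and your first bootstrap step---that ${}^\perp(i_*t')$ is localizing, hence contains $\Srm = \langle i_*\Trm\rangle$---is correct and makes explicit what the paper leaves implicit.

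However, your final step contains a genuine error. You assert that $\Srm^\perp$ is a localizing subcategory of $\Gamma_p\Drm(RC)$ ``because $\Srm$ is generated by a set, as discussed in the opening remark of this section and guaranteed by Brown representability.'' This misreads that remark. What Brown representability buys you, once $\Srm$ has a set of generators, is that the inclusion $\Srm \hookrightarrow \Gamma_p\Drm(RC)$ admits a right adjoint, so that Bousfield localization exists and one has localization triangles. It does \emph{not} give closure of $\Srm^\perp$ under coproducts. Closure of $\Srm^\perp$ under coproducts is precisely the statement that $\Srm$ is smashing, which is the content of Proposition~\ref{prop:smashingpres2} and is established \emph{using} the present lemma together with Lemma~\ref{lem:verdierproduct}. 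Invoking it here is circular.

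The paper's own proof is terse at exactly this point: it asserts ``it is enough to check'' the vanishing on generators without further comment, and the passage from $i_*\Trm^\perp \subseteq \Srm^\perp$ to $\langle i_*\Trm^\perp\rangle \subseteq \Srm^\perp$ is not justified there either, since right orthogonals are not in general closed under coproducts. One route to repair the argument is to exploit the equality $g_p = g'_p$ established in Corollary~\ref{cor:main}, which gives the alternative description $\Srm' = \{Y \in \Gamma_p\Drm(RC) : i^*Y \in \Trm^\perp\}$ and allows a more direct attack. But the justification you offer for coproduct-closure of $\Srm^\perp$ does not stand.
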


\begin{lemma}\label{lem:verdierproduct}
There is an equality
\begin{displaymath}
\Gamma_p\Drm(RC) = \{X\in \Gamma_p\Drm(RC) \; \vert \; \exists \; \text{a
triangle} \; X'\to X\to X'' \to\Sigma X' \; \text{with} \; X'\in \Srm \; \text{and} \; X''\in \Srm'\}.
\end{displaymath}
	\begin{proof}
		It is routine to verify that the full subcategory defined on the right hand side above is localizing and it contains $\Srm$ and $\Srm'$ by definition. The equality then follows from Lemma~\ref{lem:gen}.
	\end{proof}
\end{lemma}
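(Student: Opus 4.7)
My plan is to introduce the right-hand side of the asserted equality as a full subcategory $\mathcal{E}\subseteq\Gamma_p\Drm(RC)$ and verify directly that $\mathcal{E}$ is a localizing subcategory containing $\Srm\cup\Srm'$. Granted that, Lemma~\ref{lem:gen} immediately forces $\mathcal{E}\supseteq\langle\Srm\cup\Srm'\rangle=\Gamma_p\Drm(RC)$, while the reverse inclusion is tautological.

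I would begin with the easy points. The containments $\Srm\subseteq\mathcal{E}$ and $\Srm'\subseteq\mathcal{E}$ are witnessed by the trivial triangles $X\xrightarrow{\id}X\to 0\to\Sigma X$ and $0\to X\xrightarrow{\id}X\to 0$ respectively. Closure of $\mathcal{E}$ under suspension is immediate by suspending the defining triangle. Closure of $\mathcal{E}$ under arbitrary coproducts is obtained by taking coproducts of witnessing triangles and appealing to the fact that $\Srm$ and $\Srm'$ are themselves closed under coproducts as localizing subcategories of $\Gamma_p\Drm(RC)$.

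The substantive step, and the only place I expect genuine work, is closure of $\mathcal{E}$ under cones. Given a triangle $X\to Y\to Z\to\Sigma X$ with $X,Y\in\mathcal{E}$, fix decomposition triangles $X'\to X\to X''\to\Sigma X'$ and $Y'\to Y\to Y''\to\Sigma Y'$ with the primed objects in $\Srm$ and the double-primed objects in $\Srm'$. By Lemma~\ref{lem:orthog} we have $\Hom(X',Y'')=0$, so the composite $X'\to X\to Y\to Y''$ vanishes and therefore the map $X'\to Y$ lifts along $Y'\to Y$ to some $X'\to Y'$. Fixing such a lift and extending to a morphism of triangles, I would apply the $3\times 3$ lemma for triangulated categories to produce a triangle $Z'\to Z\to Z''\to\Sigma Z'$ in which $Z'=\cone(X'\to Y')\in\Srm$ and $Z''=\cone(X''\to Y'')\in\Srm'$. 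Hence $Z\in\mathcal{E}$.

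The main obstacle is precisely this cone-closure step: everything hinges on being able to lift $X'\to Y$ through $Y'$, and this is exactly what the $\Hom$-vanishing of Lemma~\ref{lem:orthog} delivers, after which the $3\times 3$ lemma assembles a decomposition for $Z$ compatible with those chosen for $X$ and $Y$. With $\mathcal{E}$ established as a localizing subcategory containing $\Srm\cup\Srm'$, Lemma~\ref{lem:gen} closes the argument.
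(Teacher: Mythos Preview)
Your proposal is correct and follows exactly the approach the paper indicates: show the right-hand side is a localizing subcategory containing $\Srm$ and $\Srm'$, then invoke Lemma~\ref{lem:gen}. The paper dismisses the verification as ``routine'' without details; you have supplied them, and in particular the cone-closure step via Lemma~\ref{lem:orthog} and the $3\times 3$ lemma is precisely the substance behind that word.
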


\begin{proposition}\label{prop:smashingpres2}
The subcategory $\Srm$ is smashing in $\Gamma_p\Drm(RC)$ with right orthogonal $\Srm'$.
	\begin{proof}
		We already know by Lemma~\ref{lem:orthog} that $\Srm' \subseteq \Srm^\perp$. Let $X$ be an object of $\Srm^\perp$. By the last lemma we know there is a triangle
		\begin{displaymath}
			X' \to X \to X'' \to\Sigma X'
		\end{displaymath}
		with $X'\in \Srm$ and $X''\in \Srm'$. But, since $X\in \Srm^\perp$ the first map must vanish implying $X'' \cong X\oplus \Sigma X'$. This in turn implies $X' \cong 0$ since $\Srm\cap \Srm' = 0$. We thus conclude that $X\cong X''$, i.e.\ $X\in \Srm'$ proving $\Srm^\perp = \Srm'$. In particular, $\Srm$ is smashing. 
	\end{proof}
\end{proposition}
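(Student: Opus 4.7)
The plan is to prove $\Srm^\perp = \Srm'$, which together with the remark preceding Lemma~\ref{lem:smashingpres1} (Brown representability furnishing the right adjoint) will give the smashing property. One inclusion, $\Srm' \subseteq \Srm^\perp$, is already established in Lemma~\ref{lem:orthog}, so the content is the reverse inclusion.

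To show $\Srm^\perp \subseteq \Srm'$, I would take an arbitrary $X \in \Srm^\perp$ and apply Lemma~\ref{lem:verdierproduct} to obtain a triangle
\begin{displaymath}
    X' \to X \to X'' \to \Sigma X'
\end{displaymath}
with $X' \in \Srm$ and $X'' \in \Srm'$. Since $X' \in \Srm$ and $X \in \Srm^\perp$, the first map in this triangle must be zero. A standard triangulated-category argument then shows the triangle splits, giving $X'' \cong X \oplus \Sigma X'$. The goal is now to force $X' = 0$; once this is done, $X \cong X''$ lands in $\Srm'$ and we are finished.

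The key remaining point is to verify that $\Srm \cap \Srm' = 0$, so that the summand $\Sigma X'$ of $X'' \in \Srm'$ being also in $\Srm$ implies it is zero. This is where the orthogonality established in Lemma~\ref{lem:orthog} reenters: any object $Y \in \Srm \cap \Srm' \subseteq \Srm \cap \Srm^\perp$ satisfies $\Hom(Y,Y) = 0$, hence $Y = 0$. The main (mild) obstacle is just packaging the splitting argument cleanly and making sure the summand lies in $\Srm$, which follows from closure of $\Srm$ under summands as a localizing subcategory. Everything else is formal given Lemmas~\ref{lem:gen}, \ref{lem:orthog}, and \ref{lem:verdierproduct}.
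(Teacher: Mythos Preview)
Your proposal is correct and follows essentially the same argument as the paper's proof: both use Lemma~\ref{lem:orthog} for one inclusion, Lemma~\ref{lem:verdierproduct} to produce the triangle, split it using $X\in\Srm^\perp$, and then kill $X'$ via $\Srm\cap\Srm'=0$. You simply spell out a couple of points the paper leaves implicit (why $\Srm\cap\Srm'=0$ and why the summand lies in $\Srm'$), but the strategy is identical.
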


We now have enough to prove that we can describe the smashing subcategories of $\Gamma_p\Drm(RC)$ in terms of the smashing subcategories of $\Drm(k(p)C)$.

\begin{theorem}\label{thm:smashingbijection}
There is an order preserving bijection
   \begin{equation*}
      \xymatrix{
          \left\{\text{smashing subcategories of $\Gamma_p\Drm(RC)$}\right\}
             \ar@<1ex>[r]^-{f_p}    &
             \left\{\text{smashing subcategories of
              $\Drm(k(p)C)$}\right\}\ar@<1ex>[l]^-{g_p}
               }
		\end{equation*}
	\begin{proof}
		We know from Proposition~\ref{prop:main}(2) that there is a bijection between the sets of localizing subcategories of $\Gamma_p\Drm(RC)$ and $\Drm(k(p)C)$ given by $f_p$ and $g_p$. By Lemma~\ref{lem:smashingpres1} and Proposition~\ref{prop:smashingpres2} both $f_p$ and $g_p$ send smashing subcategories to smashing subcategories and so the bijection restricts as claimed.
	\end{proof}
\end{theorem}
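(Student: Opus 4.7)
The plan is essentially to package together the machinery built up in the preceding lemmas and proposition, so the work to be done is really just assembling what we already have. I would start from the bijection of Proposition~\ref{prop:main}(2), which tells us that $f_p$ and $g_p$ are mutually inverse bijections between the collections of localizing subcategories on the two sides. Since both functions are manifestly order preserving (both $\add(k(p)\otimes -)$ and $\langle i_*(-)\rangle$ preserve inclusions of localizing subcategories), the remaining content is exactly that these inverse bijections restrict correctly to the smashing subcategories.

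For the forward direction, suppose $\Srm$ is a smashing subcategory of $\Gamma_p\Drm(RC)$. Then $\Srm^\perp$ is also localizing by the very definition of smashing, and Lemma~\ref{lem:smashingpres1} shows that $f_p(\Srm)$ is smashing in $\Drm(k(p)C)$, with right orthogonal $f_p(\Srm^\perp)$. For the reverse direction, suppose $\Trm$ is smashing in $\Drm(k(p)C)$. Then Proposition~\ref{prop:smashingpres2} (built on Lemmas~\ref{lem:gen}, \ref{lem:orthog}, and \ref{lem:verdierproduct}) shows that $g_p(\Trm)$ is smashing in $\Gamma_p\Drm(RC)$, with right orthogonal $g_p(\Trm^\perp)$.

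Putting these two observations together, $f_p$ sends smashing subcategories to smashing subcategories and $g_p$ does the same in the other direction; since they are already inverse on the larger collections of localizing subcategories, they are automatically inverse on the smashing sub-collections, yielding the desired order-preserving bijection. There is no real obstacle left since all the delicate arguments have already been carried out; the only thing to bear in mind is that when we invoke ``smashing'' in these arguments we implicitly need a right adjoint to the inclusion, but this is handled uniformly by the remark preceding Lemma~\ref{lem:smashingpres1} (Brown representability applied to the generating set coming from the smashing subcategory under consideration). Thus the proof is simply a one-line assembly of Proposition~\ref{prop:main}(2), Lemma~\ref{lem:smashingpres1}, and Proposition~\ref{prop:smashingpres2}.
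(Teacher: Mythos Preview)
Your proposal is correct and matches the paper's proof essentially line for line: invoke the bijection on localizing subcategories from Proposition~\ref{prop:main}(2), then cite Lemma~\ref{lem:smashingpres1} and Proposition~\ref{prop:smashingpres2} to see that $f_p$ and $g_p$ each preserve the property of being smashing, so the bijection restricts. The only extra detail you supply beyond the paper is the explicit remark on order preservation and the pointer to Brown representability, both of which are fine and harmless additions.
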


Obtaining the corresponding result for localizing subcategories generated by compact objects of $\Gamma_p\Drm(RC)$ and $\Drm(k(p)C)$ seems more subtle.
However, if $R$ is sufficiently nice at the prime ideal $p$ this is possible.
In order to state the result we need a simple preparatory lemma.

\begin{lemma}\label{lem:cg}
Let $p$ be a prime ideal of $\Spec R$. The category $\Gamma_p\Drm(RC)$ is a compactly generated triangulated category.
	\begin{proof}
		Recall that $\Gamma_p\Drm(RC)$ is the essential image of acting by
		\begin{displaymath}
			\Gamma_pR = K_\infty(p)\otimes_R R_p.
		\end{displaymath}
		It is clear that $\Drm(R_pC)$, the essential image of acting by $R_p$,
        is a compactly generated triangulated category. By \cite{stevenson-support}*{Corollary~4.11}
        the essential image of $K_\infty(p)_p\otimes_{R_p}(-)$ acting on $\Drm(R_pC)$, namely $\Gamma_p\Drm(RC)$, is also compactly generated (even by objects of $\Drm^\perf(R_pC)$).
	\end{proof}
\end{lemma}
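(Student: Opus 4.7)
The plan is to decompose the action producing $\Gamma_p\Drm(RC)$ into two stages and apply compact generation results from \cite{stevenson-support} at each stage. First I would unpack the definition: since $\Gamma_pR = K_\infty(p)\otimes_R R_p$, acting by $\Gamma_pR$ on $\Drm(RC)$ factors as first applying $R_p\otimes_R(-)$, whose essential image is $\Drm(R_pC)$, and then applying $K_\infty(p)_p\otimes_{R_p}(-)$, whose essential image inside $\Drm(R_pC)$ is exactly $\Gamma_p\Drm(RC)$.

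The first stage is standard: $R_pC$ is a flat $R_p$-linear category (in fact free on $C$), so $\Drm(R_pC)$ is compactly generated by the representable modules, which moreover are the images of the standard compact generators of $\Drm(RC)$ under base change. This gives compact generation of $\Drm(R_pC)$ and exhibits an explicit set of compact generators living in $\Drm^\perf(R_pC)$.

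For the second stage, the key input is \cite{stevenson-support}*{Corollary~4.11}: since $K_\infty(p)_p$ is a perfect complex of $R_p$-modules (built as a finite tensor product of the two-term complexes $R_p\to (R_p)_{f_i}$, localized further at $p$) and since the tensor action of $\Drm(R_p)$ on $\Drm(R_pC)$ is compatible with the existing compact generation, the essential image of $K_\infty(p)_p\otimes_{R_p}(-)$ is a compactly generated triangulated subcategory of $\Drm(R_pC)$, with compact generators given by tensoring the chosen compact generators of $\Drm(R_pC)$ against $K_\infty(p)_p$. These remain compact because smashing the generator with a perfect complex preserves compactness. This is precisely the assertion that $\Gamma_p\Drm(RC)$ is compactly generated, indeed by objects of $\Drm^\perf(R_pC)$.

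Since the argument is essentially a bookkeeping exercise chaining together two well-known facts, I do not anticipate any serious obstacle; the only thing to be careful about is to correctly identify the essential image of $K_\infty(p)_p\otimes_{R_p}(-)$ on $\Drm(R_pC)$ with $\Gamma_p\Drm(RC)$ as a subcategory of $\Drm(RC)$, which follows immediately from the fact that an object of $\Drm(RC)$ lies in $\Gamma_p\Drm(RC)$ if and only if it is $R_p$-local and $K_\infty(p)$-torsion.
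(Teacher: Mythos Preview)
Your approach is essentially identical to the paper's: factor $\Gamma_pR$ as $K_\infty(p)_p\otimes_{R_p} R_p$, observe that $\Drm(R_pC)$ is compactly generated, and then invoke \cite{stevenson-support}*{Corollary~4.11} for the second stage. One correction, however: the stable Koszul complex $K_\infty(p)_p$ is \emph{not} a perfect complex of $R_p$-modules---the two-term complex $R_p\to (R_p)_{f_i}$ has $(R_p)_{f_i}$ as a term, which is not finitely generated over $R_p$ when $f_i\in pR_p$---so your justification ``smashing the generator with a perfect complex preserves compactness'' is not the right reason. The content of Corollary~4.11 is rather that the essential image of $K_\infty(p)_p\otimes_{R_p}(-)$ coincides with the localizing subcategory generated by the \emph{finite} Koszul complex $K(p)_p$ tensored with the compact generators, and it is this finite Koszul complex that is perfect. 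With that adjustment your argument goes through and matches the paper's.
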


In the statement and proof of the following proposition, the notation
$(\Gamma_p\Drm(RC))^c$ denotes the full subcategory of compact objects of
$\Gamma_p\Drm(RC)$.

\begin{proposition}\label{prop:cg-reg}
Let $p$ be a prime ideal of $R$ such that $R_p$ is regular. Then the assignments $f_p$ and $g_p$ of Proposition~\ref{prop:main}(2) induce an order preserving bijection between localizing subcategories of $\Gamma_p\Drm(RC)$ generated by objects of $(\Gamma_p\Drm(RC))^c$ and localizing subcategories of $\Drm(k(p)C)$ generated by objects of $\Drm^\perf(k(p)C)$.
	\begin{proof}
		The base change functor $\Gamma_p\Drm(RC) \to \Drm(k(p)C)$ has a coproduct preserving right adjoint and so sends compacts to compacts by \cite{neeman-1996}*{Theorem~5.1}. Thus it is clear that $f_p$ sends any localizing subcategory of $\Gamma_p\Drm(RC)$ generated by objects of $(\Gamma_p\Drm(RC))^c$ to a localizing subcategory generated by objects of $\Drm^\perf(k(p)C)$. The argument for $g_p$ is similar, using the fact that as $R_p$ is regular, the residue field $k(p)$ is compact, and so the right adjoint of the restriction functor $\Hom_R(k(p),-)$ is also coproduct preserving. 
	\end{proof}
\end{proposition}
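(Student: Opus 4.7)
My plan is to piggyback on the bijection from Proposition~\ref{prop:main}(2), so that I need only verify that compactly generated localizing subcategories correspond under $f_p$ and $g_p$. Write $i^* = k(p) \otimes_R (-)\colon \Gamma_p\Drm(RC) \to \Drm(k(p)C)$ for base change and $i_*$ for its right adjoint, which is essentially restriction of scalars along $R \to k(p)$ (strictly speaking composed with acyclization onto $\Gamma_p$, but this is the identity on objects of $\Drm(k(p)C)$ since $k(p)$-modules are automatically supported at $p$).

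For the forward direction, suppose $\Lrm = \langle X_\alpha \rangle$ with each $X_\alpha$ compact in $\Gamma_p\Drm(RC)$. Since $i^*$ preserves coproducts and triangles, $f_p(\Lrm) = \langle i^* X_\alpha \rangle$. The functor $i^*$ preserves compactness because its right adjoint $i_*$ clearly preserves coproducts (being essentially restriction of scalars); thus by Neeman's theorem \cite{neeman-1996}*{Theorem~5.1}, each $i^* X_\alpha$ lies in $\Drm^\perf(k(p)C)$, so $f_p(\Lrm)$ is compactly generated. This direction requires no hypothesis on $R_p$.

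For the reverse direction, suppose $l(p) = \langle Y_\alpha\rangle$ with each $Y_\alpha \in \Drm^\perf(k(p)C)$. I claim $g_p(l(p)) = \langle i_* Y_\alpha\rangle$. Indeed, Lemma~\ref{lem:basechangesplits} shows $i^* i_* Y_\alpha$ is a coproduct of suspensions of $Y_\alpha$, so $f_p(\langle i_* Y_\alpha\rangle) = \langle i^* i_* Y_\alpha\rangle = \langle Y_\alpha \rangle = l(p)$; inverting $f_p$ yields the identification. It therefore remains to show each $i_* Y_\alpha$ is compact in $\Gamma_p\Drm(RC)$, equivalently that $i_*$ preserves compact objects. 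By Neeman's theorem again, this reduces to showing the right adjoint of $i_*$ preserves coproducts; that right adjoint is (the $C$-enriched analogue of) the derived functor $\Hom_R(k(p), -)$, which preserves coproducts precisely when $k(p)$ is compact in the ambient $R$-linear derived category.

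The main obstacle, and the place regularity of $R_p$ enters, is verifying this last compactness. Since we work inside $\Gamma_p\Drm(RC)$, which by Lemma~\ref{lem:cg} is $R_p$-linear and compactly generated, it suffices to know $k(p)$ is compact in $\Drm(R_p)$. When $R_p$ is regular, a regular system of parameters yields a Koszul resolution of $k(p)$ by finite free $R_p$-modules, exhibiting $k(p)$ as perfect and therefore compact, which completes the argument.
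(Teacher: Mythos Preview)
Your proof is correct and follows essentially the same route as the paper's: both directions are handled by invoking Neeman's criterion that a left adjoint preserves compacts when its right adjoint preserves coproducts, applied once to $i^*$ and once to $i_*$, with regularity of $R_p$ entering only to make $k(p)$ perfect over $R_p$ so that $\Hom_R(k(p),-)$ preserves coproducts. You simply spell out more of the details---the identifications $f_p(\langle X_\alpha\rangle)=\langle i^*X_\alpha\rangle$ and $g_p(\langle Y_\alpha\rangle)=\langle i_*Y_\alpha\rangle$, and the Koszul-resolution reason for compactness of $k(p)$---that the paper leaves implicit.
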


As an immediate consequence of the theorem and the proposition we deduce the following corollary.

\begin{corollary}\label{cor:teletransfer}
Suppose $R_p$ is regular. Then $\Gamma_p\Drm(RC)$ satisfies the telescope conjecture if and only if $\Drm(k(p)C)$ satisfies the telescope conjecture.
	\begin{proof}
		Suppose $\Drm(k(p)C)$ satisfies the telescope conjecture and let $\Srm$ be a smashing subcategory of $\Gamma_p\Drm(RC)$. Then $f_p(S)$ is smashing in $\Drm(k(p)C)$ by Theorem~\ref{thm:smashingbijection} and $g_pf_p(\Srm) = \Srm$. Since we have assumed the telescope conjecture for $\Drm(k(p)C)$ we know $f_p(S)$ is generated by objects of $\Drm^\perf(k(p)C)$. Applying Proposition~\ref{prop:cg-reg} we deduce that $\Srm = g_pf_p(\Srm)$ is generated by objects which are compact in $\Gamma_p\Drm(RC)$. Thus the telescope conjecture holds for $\Gamma_p\Drm(RC)$.
        The other implication is clear since $i^*$ preserves compact objects.
	\end{proof}
\end{corollary}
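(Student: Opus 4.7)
The plan is to deduce the corollary formally from Theorem~\ref{thm:smashingbijection} combined with Proposition~\ref{prop:cg-reg}. The former gives a bijection between smashing subcategories of $\Gamma_p\Drm(RC)$ and of $\Drm(k(p)C)$ via $f_p$ and $g_p$, while the latter gives a bijection, under the regularity hypothesis on $R_p$, between localizing subcategories generated by compact objects on the two sides. Since the telescope conjecture is exactly the statement that every smashing subcategory is generated by compacts, the result should drop out once I confirm that $f_p$ and $g_p$ simultaneously respect both conditions.

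For the forward implication, I would let $\Srm$ be a smashing subcategory of $\Gamma_p\Drm(RC)$ and apply Theorem~\ref{thm:smashingbijection} to see that $f_p(\Srm)$ is smashing in $\Drm(k(p)C)$. The assumed telescope conjecture on $\Drm(k(p)C)$ then yields that $f_p(\Srm)$ is generated by objects of $\Drm^\perf(k(p)C)$. Proposition~\ref{prop:cg-reg} transports this generation property back through $g_p$, so that $g_p(f_p(\Srm)) = \Srm$ is generated by compact objects of $\Gamma_p\Drm(RC)$, which is precisely the telescope conjecture on that side.

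For the converse I would take a smashing subcategory $\Trm$ of $\Drm(k(p)C)$, note that $g_p(\Trm)$ is smashing in $\Gamma_p\Drm(RC)$ by Theorem~\ref{thm:smashingbijection}, and hence generated by compact objects under the assumed telescope conjecture. Pushing these generators forward along $f_p = k(p)\otimes_R(-)$, which preserves compacts because its right adjoint is coproduct preserving (the latter itself being a consequence of $k(p)$ being compact in $\Drm(R_p)$, which is where the regularity of $R_p$ re-enters), exhibits $\Trm = f_p(g_p(\Trm))$ as generated by objects of $\Drm^\perf(k(p)C)$.

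I do not anticipate a real obstacle beyond what has already been dealt with: the regularity hypothesis on $R_p$ has done all the work in Proposition~\ref{prop:cg-reg}, guaranteeing that $k(p)$ is compact in $\Drm(R_p)$ so that restriction of scalars along $R_p \to k(p)$ preserves coproducts, which in turn is the ingredient needed to ensure that both $f_p$ and $g_p$ respect compact generation. With that in place the proof is essentially a two-line invocation of the bijections in Theorem~\ref{thm:smashingbijection} and Proposition~\ref{prop:cg-reg}.
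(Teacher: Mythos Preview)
Your proposal is correct and follows essentially the same route as the paper's proof: the forward direction is identical, and the converse is the same argument spelled out more fully (the paper just says ``clear since $i^*$ preserves compact objects''). One small inaccuracy worth noting: in your converse direction you say that $f_p = k(p)\otimes_R(-)$ preserves compacts because its right adjoint is coproduct preserving, and that this ``is where the regularity of $R_p$ re-enters.'' In fact the right adjoint of $f_p$ is restriction of scalars $i_*$, which \emph{always} preserves coproducts, so regularity plays no role in this direction; regularity is only needed (as in Proposition~\ref{prop:cg-reg}) to make $g_p$ preserve compact generation, which you use in the forward direction.
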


This corollary already buys us something in a concrete setting, although it is not clear how to extend it to all of $\Drm(RC)$.

\begin{corollary}
Let $Q$ be a quiver and let $R$ be a commutative noetherian ring. For each $p\in \Spec R$ such that $R_p$ is regular the telescope conjecture holds for $\Gamma_p\Drm(RC)$.
	\begin{proof}
		By the previous corollary it is sufficient to verify the telescope conjecture for $\Drm(k(p)Q)$. This has been done by Krause and \stovicek, see \cite{krause-stovicek}*{Theorem~7.1}.
	\end{proof}
\end{corollary}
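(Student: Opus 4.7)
The plan is to observe that this corollary is an almost immediate consequence of the machinery built up just prior to its statement, combined with a known telescope-conjecture result for hereditary algebras over a field. Concretely, I would invoke Corollary~\ref{cor:teletransfer} to reduce the statement to verifying the telescope conjecture for $\Drm(k(p)Q)$, and then appeal to an external theorem for the residue-field case.

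In detail: since the hypothesis says $Q$ is a quiver (so $RQ$ is the $R$-linear path algebra of the free category on $Q$), and since $R_p$ is assumed regular, Corollary~\ref{cor:teletransfer} applies and gives
\begin{equation*}
\text{TC holds for }\Gamma_p\Drm(RQ)\;\Longleftrightarrow\;\text{TC holds for }\Drm(k(p)Q).
\end{equation*}
It therefore suffices to establish the telescope conjecture over the residue field $k(p)$. This is exactly the content of the Krause--\stovicek\ theorem \cite{krause-stovicek}*{Theorem~7.1}, which proves the telescope conjecture for $\Drm(kQ)$ for every field $k$ and every quiver $Q$ (this uses in an essential way the hereditarity of $kQ$). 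Putting these two inputs together finishes the proof.

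There is no real obstacle here; the corollary is a packaging statement whose work has already been done. The only point that requires some care is making sure the hypothesis on regularity of $R_p$ is what Corollary~\ref{cor:teletransfer} actually needs (which it is, via Proposition~\ref{prop:cg-reg}, where regularity is used to ensure $k(p)$ is a compact object of $\Drm(R_p)$ so that the relevant right adjoint preserves coproducts and hence compactness transfers both ways). Once that is noted, the conclusion is immediate from the cited result of Krause--\stovicek.
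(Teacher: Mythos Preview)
Your proposal is correct and follows exactly the same approach as the paper: invoke Corollary~\ref{cor:teletransfer} (the ``previous corollary'') to reduce to the telescope conjecture for $\Drm(k(p)Q)$, then cite Krause--\stovicek\ \cite{krause-stovicek}*{Theorem~7.1}. Your additional remark about how regularity enters via Proposition~\ref{prop:cg-reg} is accurate elaboration but not a departure from the paper's argument.
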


We give one additional lemma that could prove useful in resolving Question~\ref{qn}.

\begin{lemma}\label{lem:psmash}
If $\Srm$ is a smashing subcategory of $\Drm(RC)$ then for every $p\in \Spec R$ the localizing subcategory $\Gamma_p\Srm$ is smashing in $\Gamma_p\Drm(RC)$.
	\begin{proof}
		It is not hard to check that both $\Gamma_p\Srm$ and $\Gamma_p(\Srm^\perp)$ are localizing subcategories of $\Gamma_p\Drm(RC)$. Moreover,
		\begin{displaymath}
		\Gamma_p\Srm \subseteq \Srm \quad \text{and} \quad \Gamma_p(\Srm^\perp) \subseteq \Srm^\perp
		\end{displaymath}
		by Lemma~\ref{lem:tensorclosure}. In particular, $\Gamma_p(\Srm^\perp) \subseteq (\Gamma_p\Srm)^\perp$. Applying $\Gamma_pR\otimes_R(-)$ to localization triangles for $\Srm$ shows that every object $X$ of $\Gamma_p\Drm(RC)$ fits into a triangle
		\begin{displaymath}
			X' \to X \to X'' \to\Sigma X'
		\end{displaymath}
		with $X' \in \Gamma_p\Srm$ and $X'' \in \Gamma_p(\Srm^\perp)$ and so one can conclude the proof by arguing as in the proof of Proposition~\ref{prop:smashingpres2}.
	\end{proof}
\end{lemma}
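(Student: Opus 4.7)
My plan is to show that the right orthogonal of $\Gamma_p\Srm$ computed inside $\Gamma_p\Drm(RC)$ is exactly $\Gamma_p(\Srm^\perp)$, which will establish smashingness (the existence of the right adjoint to the inclusion being handled by Brown representability, as in the remark opening the section).

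First I would observe that both $\Gamma_p\Srm$ and $\Gamma_p(\Srm^\perp)$ are localizing subcategories of $\Gamma_p\Drm(RC)$: they are the essential images of the localizing subcategories $\Srm$ and $\Srm^\perp$ under the coproduct-preserving exact functor $\Gamma_pR\otimes_R(-)$, and they land in $\Gamma_p\Drm(RC)$ because $\Gamma_pR\otimes_R\Gamma_qR=0$ for $q\neq p$. By Lemma~\ref{lem:tensorclosure} the containments $\Gamma_p\Srm\subseteq \Srm$ and $\Gamma_p(\Srm^\perp)\subseteq \Srm^\perp$ hold inside $\Drm(RC)$, so orthogonality in the ambient category is inherited and we get $\Gamma_p(\Srm^\perp)\subseteq (\Gamma_p\Srm)^\perp$ computed inside $\Gamma_p\Drm(RC)$.

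Next I would produce, for every $X\in \Gamma_p\Drm(RC)$, a triangle decomposition with ends in $\Gamma_p\Srm$ and $\Gamma_p(\Srm^\perp)$. Since $\Srm$ is smashing in $\Drm(RC)$ there is a localization triangle $\Gamma_\Srm X\to X\to L_\Srm X\to$. Tensoring over $R$ with $\Gamma_pR$ and using $\Gamma_pR\otimes_R X\simeq X$ (the defining property of membership in $\Gamma_p\Drm(RC)$) yields a triangle $X'\to X\to X''\to$ with $X'\in \Gamma_p\Srm$ and $X''\in \Gamma_p(\Srm^\perp)$.

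Finally I would repeat the endgame of Proposition~\ref{prop:smashingpres2}: if $X\in (\Gamma_p\Srm)^\perp$ then the leftmost map in this triangle vanishes, so $X''\cong X\oplus \Sigma X'$; and since $\Gamma_p\Srm\cap \Gamma_p(\Srm^\perp)=0$ (inherited from $\Srm\cap \Srm^\perp=0$), this forces $X'\cong 0$ and hence $X\cong X''\in \Gamma_p(\Srm^\perp)$. Combined with the reverse containment above this gives $(\Gamma_p\Srm)^\perp=\Gamma_p(\Srm^\perp)$ inside $\Gamma_p\Drm(RC)$, which is exactly the statement that $\Gamma_p\Srm$ is smashing there. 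I do not anticipate a genuine obstacle: every ingredient is a direct consequence of $\Drm(R)$-action closure from Lemma~\ref{lem:tensorclosure} together with the triangle obtained by tensoring the smashing decomposition with $\Gamma_pR$; the only point needing any care is ensuring that the ambient category for the orthogonal computation is $\Gamma_p\Drm(RC)$ rather than $\Drm(RC)$, which is automatic once everything has been restricted to the $p$-local piece.
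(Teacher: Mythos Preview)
Your proposal is correct and follows essentially the same route as the paper's proof: both use Lemma~\ref{lem:tensorclosure} to get $\Gamma_p\Srm\subseteq\Srm$ and $\Gamma_p(\Srm^\perp)\subseteq\Srm^\perp$, tensor the smashing localization triangle by $\Gamma_pR$ to produce the required decomposition, and then defer to the argument of Proposition~\ref{prop:smashingpres2} for the conclusion. Your write-up is slightly more detailed in spelling out the endgame, but the content is the same.
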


In summary, we understand what happens at ``points'' and we can pass from a smashing subcategory of $\Drm(RC)$ to a smashing subcategory at each prime. What is not clear is how to use this pointwise information to deduce something about the original smashing subcategory. The naive idea, based on the existing proofs of the telescope conjecture in various instances, would be to prove some sort of specialization closure condition for the section corresponding to a smashing subcategory as in Theorem~\ref{thm:ADEsmashing}. One could then hope to combine such a condition with the fibrewise results above. However, the following example shows that one can not always expect specialization closure.

\begin{example}
    Consider the projection $\Spec k[x,y]\rightarrow\Spec k[x]$.
    We view $\Mod\, k[x,y]$ as a
    $k[x]$-linear category. This gives rise to an action of $\Drm(k[x])$ on $\Drm(k[x,y])$. Let $\Srm$ be the smashing subcategory of $\Drm(k[x,y])$
    determined by the closed curve $xy=1$. Then, the support of $\Srm$ with respect to the action of $\Drm(k[x])$ is open in $\Spec
    k[x]$. Of course, in this case the telescope conjecture does hold. 
\end{example}


%
%

\begin{bibdiv}
\begin{biblist}

\bib{balmer}{article}{
    author={Balmer, Paul},
    title={The spectrum of prime ideals in tensor triangulated categories},
    journal={J. Reine Angew. Math.},
    volume={588},
    date={2005},
    pages={149--168},
    issn={0075-4102},
}




\bib{benson-carlson-rickard}{article}{
    author={Benson, D. J.},
    author={Carlson, Jon F.},
    author={Rickard, Jeremy},
    title={Thick subcategories of the stable module category},
    journal={Fund. Math.},
    volume={153},
    date={1997},
    number={1},
    pages={59--80},
    issn={0016-2736},
    review={\MR{1450996 (98g:20021)}},
}

\bib{benson-iyengar-krause}{article}{
    author={Benson, David J.},
    author={Iyengar, Srikanth B.},
    author={Krause, Henning},
    title={Stratifying modular representations of finite groups},
    journal={Ann. of Math. (2)},
    volume={174},
    date={2011},
    number={3},
    pages={1643--1684},
    issn={0003-486X},
}

\bib{bousfield-localization}{article}{
    author={Bousfield, A. K.},
    title={The localization of spectra with respect to homology},
    journal={Topology},
    volume={18},
    date={1979},
    number={4},
    pages={257--281},
    issn={0040-9383},
}

 

\bib{bruning}{article}{
    author={Br{\"u}ning, Kristian},
    title={Thick subcategories of the derived category of a hereditary
    algebra},
    journal={Homology, Homotopy Appl.},
    volume={9},
    date={2007},
    number={2},
    pages={165--176},
    issn={1532-0073},
}



 \bib{crawley-boevey-rigid}{article}{
     author={Crawley-Boevey, William},
     title={Rigid integral representations of quivers},
     conference={
         title={Representation theory of algebras},
         address={Cocoyoc},
         date={1994},
     },
     book={
         series={CMS Conf. Proc.},
         volume={18},
         publisher={Amer. Math. Soc.},
         place={Providence, RI},
     },
     date={1996},
     pages={155--163},
 }

\bib{dellambrogio-stevenson-graded}{article}{
    author={Dell'Ambrogio, Ivo},
    author={Stevenson, Greg},
    title={On the derived category of a graded commutative Noetherian ring},
    journal={J. Algebra},
    volume={373},
    date={2013},
    pages={356--376},
    issn={0021-8693},
}

\bib{dellambrogio-stevenson-spectra}{article}{
    author={Dell'Ambrogio, Ivo},
    author={Stevenson, Greg},
    title={Even more spectra: tensor triangular comparison maps via graded
    commutative 2-rings},
    journal={Appl. Categ. Structures},
    volume={22},
    date={2014},
    number={1},
    pages={169--210},
    issn={0927-2852},
}





\bib{devinatz-hopkins-smith}{article}{
    author={Devinatz, Ethan S.},
    author={Hopkins, Michael J.},
    author={Smith, Jeffrey H.},
    title={Nilpotence and stable homotopy theory. I},
    journal={Ann. of Math. (2)},
    volume={128},
    date={1988},
    number={2},
    pages={207--241},
    issn={0003-486X},
}

\bib{Greenlees}{article}{
   author={Greenlees, J. P. C.},
   title={$K$-homology of universal spaces and local cohomology of the
   representation ring},
   journal={Topology},
   volume={32},
   date={1993},
   number={2},
   pages={295--308},
}

\bib{hopkins-smith}{article}{
    author={Hopkins, Michael J.},
    author={Smith, Jeffrey H.},
    title={Nilpotence and stable homotopy theory. II},
    journal={Ann. of Math. (2)},
    volume={148},
    date={1998},
    number={1},
    pages={1--49},
    issn={0003-486X},
}




\bib{ingalls-thomas}{article}{
    AUTHOR = {Ingalls, Colin}
    Author = {Thomas, Hugh},
     TITLE = {Noncrossing partitions and representations of quivers},
   JOURNAL = {Compos. Math.},
    VOLUME = {145},
      YEAR = {2009},
    NUMBER = {6},
     PAGES = {1533--1562},
      ISSN = {0010-437X},
}





\bib{Kelly}{book}{
 author		= {Kelly, Max},
 title		= {Basic Concepts of Enriched Category Theory},
 publisher	= {Cambridge University Press},
 series		= {London Mathematical Society Lecture Notes},
 number		= {64},
 year		= {1982},
 isbn		= {0521287022},
}


\bib{krause-report}{article}{
    author={Krause, Henning},
    title={Report on locally finite triangulated categories},
    journal={J. K-Theory},
    volume={9},
    date={2012},
    number={3},
    pages={421--458},
    issn={1865-2433},
}

\bib{krause-stovicek}{article}{
    author={Krause, Henning},
    author={\v{S}\v{t}ov{\'{\i}}{\v{c}}ek, Jan},
    title={The telescope conjecture for hereditary rings via Ext-orthogonal
    pairs},
    journal={Adv. Math.},
    volume={225},
    date={2010},
    number={5},
    pages={2341--2364},
    issn={0001-8708},
}

\bib{KrauseLoc}{article}{
   author={Krause, Henning},
   title={Localization theory for triangulated categories},
   conference={
      title={Triangulated categories},
   },
   book={
      series={London Math. Soc. Lecture Note Ser.},
      volume={375},
      publisher={Cambridge Univ. Press, Cambridge},
   },
   date={2010},
   pages={161--235},
}

\bib{NeeCat}{book}{,
    AUTHOR = {Neeman, A.},
     TITLE = {Triangulated categories},
    SERIES = {Annals of Mathematics Studies},
    VOLUME = {148},
 PUBLISHER = {Princeton University Press},
   ADDRESS = {Princeton, NJ},
      YEAR = {2001},
     PAGES = {viii+449},
  
}

\bib{neeman-chromatic}{article}{
    author={Neeman, Amnon},
    title={The chromatic tower for $D(R)$},
    note={With an appendix by Marcel B\"okstedt},
    journal={Topology},
    volume={31},
    date={1992},
    number={3},
    pages={519--532},
    issn={0040-9383},
}


 \bib{neeman-1996}{article}{
     author={Neeman, Amnon},
     title={The Grothendieck duality theorem via Bousfield's techniques and Brown representability},
     journal={J. Amer. Math. Soc.},
     volume={9},
     date={1996},
     number={1},
     pages={205--236},
     issn={0894-0347},
 }


%

\bib{ravenel-localization}{article}{
    author={Ravenel, Douglas C.},
    title={Localization with respect to certain periodic homology theories},
    journal={Amer. J. Math.},
    volume={106},
    date={1984},
    number={2},
    pages={351--414},
    issn={0002-9327},
}

\bib{stevenson-support}{article}{
    author={Stevenson, Greg},
    title={Support theory via actions of tensor triangulated categories},
    journal={J. Reine Angew. Math.},
    volume={681},
    date={2013},
    pages={219--254},
    issn={0075-4102},
}

\bib{stevenson-singularity}{article}{
    author={Stevenson, Greg},
    title={Subcategories of singularity categories via tensor actions},
    journal={Compos. Math.},
    volume={150},
    date={2014},
    number={2},
    pages={229--272},
    issn={0010-437X},
}

 
 
\bib{thomason-triangulated}{article}{
    author={Thomason, R. W.},
    title={The classification of triangulated subcategories},
    journal={Compos. Math.},
    volume={105},
    date={1997},
    number={1},
    pages={1--27},
    issn={0010-437X},
}

\end{biblist}
\end{bibdiv}

\end{document}